\theoremstyle{plain}
\newtheorem{teor}{Theorem}[section]
\newtheorem{lema}[teor]{Lemma}
\newtheorem{coro}[teor]{Corollary}
\newtheorem{prop}[teor]{Proposition}
\theoremstyle{definition}
\newtheorem{defi}[teor]{Definition}
\newtheorem{eje}{Example}[section]
\newtheorem{nota}[teor]{Remark}
\newtheoremstyle{teoremacita}
{3pt}
{3pt}
{\itshape}
{}
{\bfseries}
{}
{ }
{\thmname{#1}\thmnumber{ #2'}\thmnote{ #3}.}
\theoremstyle{teoremacita} \newtheorem*{teor*}{}
\def\N{\mathbb N}
\def\R{\mathbb R}
\def\C{\mathbb C}
\def\N{\mathbb N}
\def\R{\mathbb R}
\def\C{\mathbb C}
\def\a{\alpha}
\def\b{\beta}
\def\rr{{\boldsymbol r}}
\def\xx{{\boldsymbol x}}
\def\yy{{\boldsymbol y}}
\def\uu{{\boldsymbol u}}
\def\xxi{{\boldsymbol \xi}}
\def\aa{{\boldsymbol \alpha}}
\def\bb{{\boldsymbol \beta}}
\def\ee{{\boldsymbol \varepsilon}}
\def\00{{\boldsymbol 0}}
\def\11{{\boldsymbol 1}}
\def\zz{{\boldsymbol z}}
\def\PP{{P}}
\def\d{\partial}
\def\ll{\boldsymbol{\lambda}}
\begin{document}

\title{Formal $P$-Gevrey series solutions of first order holomorphic PDEs\label{ra_ch1}}

\author{Sergio A. Carrillo}
\address{Escuela de Ciencias Exactas e Ingenier\'{i}a, Universidad Sergio Arboleda, Calle 74, $\#$ 14-14, Bogot\'{a}, Colombia.}
\email{sergio.carrillo@usa.edu.co}

\author{Carlos A. Hurtado}
\address{Universidad Privada del Norte, Campus Bre\~{n}a, Avenida Tingo Maria 1122, Lima, Per\'{u}}
\email{carlos.hurtado@upn.edu.pe}


\subjclass[2010]{Primary 35F35, Secondary 35F05, 35C10, 34M25, 34M60}


\begin{abstract} We provide a complete and self-contained proof of the Gevrey character, in an analytic function $P$, of formal power series solutions of some families of first order holomorphic PDEs.  Our approach is based on a majorant series technique by applying Nagumo norms joint with a division algorithm.
\end{abstract}

\maketitle

\vspace{-0.5cm}

\section{Introduction}

In the study of singular ordinary and partial differential equations or in the case of singular perturbation problems, a technique to obtain holomorphic solutions from formal ones is by applying certain summability methods such as Borel--summability and multisummability. These solutions represent asymptotically the formal power series solution as the variables approach the singular locus in adequate domains. In general, the first step to follow this method is to determine the existence, uniqueness and divergence rate (Gevrey order) of these series. The study of their summability is determined by the nature of the equation and it is a much harder problem. We refer to Refs. \cite{BalserLoday09,BalserMozo02,CDMS,Carr1,CostinTanveer2007,CostinTanveer2009,Hibino2006I,Hibino2006II,Klimes2016,LastraMalek2015,Ouchi02,TahataYamazawa2013,Zhang19,YamazawaYoshino15,Remy20} for some examples of ODEs and PDEs, which are susceptible to this type of analysis.

The goal of this paper is to provide a self-contained proof on the Gevrey type of formal power series solutions $\widehat{\yy}$ of holomorphic partial differential equations of first order at a singular locus $S$. We will show that under a suitable geometric condition, the germ of analytic function $P$ that generates $S$ is the generic source of divergence: $\widehat{\yy}$ is a $P$-$1$--Gevrey series. Roughly speaking, this means that we can write $\widehat{\yy}=\sum_{n=0}^\infty y_n P^n$ as a power series in $P$, where the coefficients $y_n$ are holomorphic in a common polydisc $D$ at the origin and $\sup_{\xx\in D}|y_n(\xx)|\leq CA^n n!$, for some constants $C,A>0$. More specifically, if $\xx=(x_1,\dots,x_d)\in(\C^d,\00)$ and $\yy=(y_1,\dots,y_N)\in\C^N$, 
we consider a germ $P$ of a non-zero holomorphic function on $(\C^d,\00)$ such that $P(\00)=0$, and the system of partial differential equations \begin{equation}\label{Eq. Main Eq}
	P(\xx)L(\yy)(\xx)=F(\xx,\yy),\quad \text{ where } L:=a_1(\xx)\d_{x_1}+\cdots+a_d(\xx)\d_{x_d},
\end{equation}  is a first order differential operator with holomorphic coefficients $a_j$ near the origin -not all identically zero-, and $F$ is a $\C^N$-valued holomorphic map defined near $(\00,\00)\in\C^d\times\C^N$. The \textit{singular locus} of (\ref{Eq. Main Eq}) is the germ at the origin of the analytic set \[S:=\{\xx\in(\C^d,\00) : P(\xx)a_j(\xx)=0, j=1,\dots,d\},\] where the nature of equation (\ref{Eq. Main Eq}) changes from differential to implicit one. Note that $S$ contains the zero set of $P$, and both coincide if $a_j(\00)\neq 0$, for at least some index $j$. Furthermore, if $\frac{\partial F}{\partial \yy}(\00,\00)$ is an invertible matrix, $P$ cannot be canceled from (\ref{Eq. Main Eq}), so its zero set is a  non-removable singular part of (\ref{Eq. Main Eq}). Under these conditions our main result can be stated as follows.

\begin{teor}\label{Thm Main Result} Consider the partial differential equation (\ref{Eq. Main Eq}) where $F(\00,\00)=\00$, and $\mu:=\frac{\partial F}{\partial \yy}(\00,\00)$ is an invertible matrix. If $P$ divides $L(P)$, equation (\ref{Eq. Main Eq}) has a  unique formal power series solution $\widehat{\yy}\in \C[[\xx]]^N$. Moreover, $\widehat{\yy}$ is a $P$-$1$--Gevrey series.
\end{teor}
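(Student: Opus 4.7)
My plan is to construct $\widehat{\yy}$ directly as a formal series in powers of $P$, namely to seek holomorphic germs $y_0,y_1,y_2,\dots$ at $\00$ such that $\widehat{\yy}=\sum_{n\geq 0} y_n P^n$, so that existence, uniqueness, and the $P$-$1$--Gevrey character of $\widehat{\yy}$ all come out of one recursion. The hypothesis $P\mid L(P)$ is the geometric input that makes this possible: writing $L(P)=cP$ for some $c$ holomorphic near the origin, one checks $L(g P^n)=(L(g)+ncg)P^n$ for every holomorphic germ $g$, so $L$ respects the filtration by powers of $P$. Substituting the ansatz into (\ref{Eq. Main Eq}) and expanding
\[
F(\xx,\yy)=F(\xx,y_0)+M(\xx)(\yy-y_0)+R(\xx,y_0,\yy-y_0), \qquad M(\xx):=\tfrac{\d F}{\d \yy}(\xx,y_0(\xx)),
\]
with $R$ quadratic in $\yy-y_0$, a comparison of the coefficients of $P^n$ gives $F(\xx,y_0)=\00$ at level $n=0$ and
\[
M(\xx)\,y_n \;=\; L(y_{n-1})+(n-1)\,c\,y_{n-1}-R_n(y_0,\dots,y_{n-1}),\qquad n\geq 1,
\]
where $R_n$ is the $P^n$-contribution of $R(\xx,y_0,\sum_{k\geq 1}y_kP^k)$ and therefore depends only on $y_0,\dots,y_{n-1}$. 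Since $\mu=M(\00)$ is invertible and $F(\00,\00)=\00$, the implicit function theorem yields a unique holomorphic $y_0$ with $y_0(\00)=\00$; then $M^{-1}$ is holomorphic near $\00$ and the recursion determines each $y_n$ uniquely as a holomorphic germ at $\00$, which simultaneously proves existence and uniqueness of the formal solution $\widehat{\yy}\in\C[[\xx]]^N$.

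For the Gevrey bound I would fix a polydisc $D_r\subset\C^d$ on which the germs $y_0$, $c$, $M^{-1}$, $a_1,\dots,a_d$ and the Taylor series of $R$ converge, and work with the Nagumo norms
\[
\|f\|_p := \sup_{0\leq s<r}(r-s)^p\sup_{D_s}|f|,\qquad p\geq 0,
\]
which satisfy $\|fg\|_{p+q}\leq\|f\|_p\|g\|_q$, $\|\d_{x_j}f\|_{p+1}\leq e(p+1)\|f\|_p$ and $\sup_{D_s}|f|\leq(r-s)^{-p}\|f\|_p$. Setting $\a_n:=\|y_n\|_n$, the recursion combined with these inequalities produces an estimate of the shape
\[
\a_n \;\leq\; C_1\,n\,\a_{n-1} + C_2\!\!\sum_{\substack{n_1+\cdots+n_k=n\\ k\geq 2,\ n_i\geq 1}}\!\!\a_{n_1}\cdots\a_{n_k} + D_n,
\]
where $D_n$ collects $y_0$-contributions and decays geometrically. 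A standard majorant argument---comparing the generating series $\varphi(t):=\sum_n\a_n t^n$ with the unique formal solution of an auxiliary analytic first-order ODE whose solution is known to be $1$-Gevrey in $t$---then yields $\a_n\leq CA^n n!$, and hence $\sup_{D_{r'}}|y_n|\leq C'A^n n!$ on any smaller polydisc $D_{r'}\subset D_r$, which is exactly the $P$-$1$--Gevrey bound.

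The main obstacle I anticipate is the bookkeeping of the nonlinear term $R_n$: since Nagumo norms behave well under products (orders add), the order of a monomial $y_{n_1}\cdots y_{n_k}$ matches the $P^n$-level exactly when $n_1+\cdots+n_k=n$, but one must track the combinatorics of the Taylor expansion of $R$ in $\yy$ together with the geometric decay of its coefficients so that $\varphi$ satisfies a tractable functional inequality. This is the point where the division algorithm and the Nagumo machinery advertised in the abstract must be combined. The hypothesis $P\mid L(P)$ is precisely what produces the crucial factor of $n$ in the linear term $L(y_{n-1})+(n-1)cy_{n-1}$, giving Gevrey order exactly $1$ rather than something larger.
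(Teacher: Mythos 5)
Your construction is \emph{genuinely different} from the paper's, and in the existence and Gevrey parts it is actually simpler. The paper builds the whole argument on the generalized Weierstrass division: it fixes an injective linear form $\ell$, and decomposes every series (including $c$, $A$, and the Taylor coefficients $A_I$) as $\sum f_n P^n$ with $f_n\in\Delta_{\nu_\ell(P)}$; the projection operator $R_{P,\ell}$ and the quotient operator $Q_{P,\ell}$ then appear everywhere in the recursion for $y_n$. You avoid this machinery entirely by \emph{not} constraining the $y_n$ to any complement of $(P)$: you pick $y_0$ by the implicit function theorem, expand $F$ around $y_0$, and \emph{impose} the level-$n$ equations $M y_n = L(y_{n-1}) + (n-1)c\,y_{n-1} - R_n$ as sufficient conditions. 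This is legitimate precisely because the definition of a $P$-$1$--Gevrey series only asks for \emph{some} decomposition $\widehat{\yy}=\sum y_n P^n$ with bounded coefficients on a common polydisc satisfying $n!$-type bounds --- canonicity is not required. Your Nagumo variant and the resulting majorant estimate are sound. So for existence of a formal solution and the $P$-$1$--Gevrey bound, your route works and sidesteps Proposition~\ref{Generalized Weierstrass Division} altogether.

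There is, however, a genuine gap in the \emph{uniqueness} claim. You assert that ``the recursion determines each $y_n$ uniquely ... which simultaneously proves existence and uniqueness of the formal solution.'' This does not follow. Because the decomposition $\widehat{\yy}=\sum y_n P^n$ with holomorphic germs $y_n$ is highly non-unique (for instance $0 = 1\cdot P + (-P)\cdot P^0$), ``equating coefficients of $P^n$'' is not a valid deduction --- you are \emph{choosing} one particular lift, and the fact that this lift is determined by the recursion says nothing about whether a different formal solution might exist. The paper's proof gets uniqueness for free because the Weierstrass decomposition with $y_n\in\Delta_{\nu_\ell(P)}$ \emph{is} canonical: any formal solution has such a decomposition, its coefficients are forced to satisfy the recurrence~(\ref{Eq. mu yn}), and that recurrence has a unique solution. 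To fill the gap in your approach you would either have to invoke this canonicity, or argue uniqueness directly by equating homogeneous components in $\xx$: two formal solutions $\widehat{\yy}_1,\widehat{\yy}_2$ vanishing at $\00$ would give $\widehat{\zz}=\widehat{\yy}_1-\widehat{\yy}_2$ solving $P L(\widehat{\zz})=\widetilde{M}\widehat{\zz}$ with $\widetilde{M}(\00)=\mu$, and at the lowest nonzero degree $k$ this reduces to $\mu\widehat{\zz}_k = P_1\cdot\sum_j a_j(\00)\partial_{x_j}\widehat{\zz}_k$; the right-hand operator is nilpotent because the rank-$\leq 1$ matrix $\Lambda=(a_i(\00)\,\partial_{x_j}P(\00))_{i,j}$ has trace $L(P)(\00)=0$ under the hypothesis $P\mid L(P)$, so $\mu - (\text{nilpotent})$ is injective and $\widehat{\zz}_k=0$. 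This last argument is not difficult but it does use the hypothesis $P\mid L(P)$ in an essential way, and you should include it (or invoke the Weierstrass decomposition) rather than claiming uniqueness comes out of the recursion by itself.
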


The notion of $P$-Gevrey series was introduced by J. Mozo-Fern\'{a}ndez and R. Sch\"{a}fke in Ref. \cite{MS19} in the framework of asymptotic expansions and summability with respect to a germ of an analytic function, and it generalizes the notion of Gevrey series in one variable. Our aim is prove Theorem \ref{Thm Main Result} resorting only on the ideas of this recent theory instead of using previous results on divergent solutions of systems of holomorphic PDEs, see, e.g., \cite{GerardTahara}.

Equation (\ref{Eq. Main Eq}) falls into the category of singular first order PDEs  \begin{equation}\label{Eq. Main 2}
	L_1(\yy)(\xx)=F(\xx,\yy),
\end{equation} where $L_1=\sum_{j=1}^d X_j(\xx) \d_{x_j}$ is a germ of a  holomorphic vector field, singular at $\00\in\C^d$, i.e., $X_j(\00)=0$, for all $j=1,\dots,d$. The convergence vs. rate of divergence of formal power series solutions of (\ref{Eq. Main 2}) has been studied extensively by several authors, see, e.g., Refs.  \cite{Oshima73,Kaplan79,Hibino99,Hibino04,Yamazawa2000,Ouchi05}. These growth properties depend on conditions on $S=\{\xx\in\C^d : X_j(\xx)=0, j=1,\dots,d\}$ or on its associated ideal $(X_1,\dots,X_d)\subseteq \C\{\xx\}$, and on non-resonance conditions on $\mu$ and the Jacobian matrix $\Lambda:=\left(\d_{x_i}X_j(\00)\right)_{i,j}$ that we will explain below. Then, if $S$ is an analytic submanifold, c.f., Refs. \cite{Kaplan79,Yamazawa2000}, by choosing a suitable analytic coordinate system $\xxi=(\xi_1,\dots,\xi_d)$ of $(\C^d,\00)$ where $S$ is the zero set of some of these coordinates, and $\Lambda$ is in canonical Jordan form, the convergence or a Gevrey type of solutions can be obtained. For a recent account on these results see Ref. \cite{LastraTahara19} and the references therein.

Set $\text{Spec}(\mu)=\{\mu_1,\dots,\mu_d\}$ and  $\text{Spec}(\Lambda)=\{\lambda_1,\dots\lambda_m,0,\dots,0\}$, where $\mu_k\neq0$, $\lambda_j\neq 0$, and all eigenvalues are repeated according multiplicity. If $m\geq 1$, the classical non-resonance \textit{Poincar\'{e} condition} requests that   \begin{equation}\label{Poincare condition}
	|\lambda_1\b_1+\dots+\lambda_m\b_m-\mu_k|\geq \nu |\bb|,\quad \text{ for all } \bb\in\N^m, k=1,\dots,d,
\end{equation} for some constant $\nu>0$, see, e.g., \cite[p. 71]{Chow94}, \cite[p. 166]{Kaplan79}. Then, if (\ref{Poincare condition}) is valid and $m=d$, i.e., $\Lambda$ is invertible, the solution of (\ref{Eq. Main 2}) is convergent, see Refs. \cite{Oshima73,Hibino99,Hibino04}. Otherwise, the solution is generically divergent, but of some Gevrey order in the variable $\xxi$, depending on the sizes of the blocks of the canonical Jordan form of $\Lambda$ associated to the zero eigenvalue, see Refs.  \cite{Hibino99,Hibino04}. It is worth remarking that (\ref{Poincare condition}) is better known in the theory of normal forms, see, e.g., Refs. \cite{BraaksmaStolov07,Ouchi05}, or in the problem of existence of analytic invariant manifolds, see Ref.  \cite{CS14}, both for holomorphic vector fields defined near a singular point. In particular, in the problem of their local analytic linearization where much more complicated non-resonance conditions (Siegel \cite[Theorem 2.12]{Chow94} or Brjuno \cite[p. 606]{BraaksmaStolov07}) appear. 	

Returning to our problem, the linear part of $L_1=P\cdot L$ in equation (\ref{Eq. Main Eq}) can be highly degenerated and $\Lambda$ is generically the zero matrix. In fact, $\Lambda=(a_i(\00)p_j)_{i,j}$, $p_j=\d_{x_j}P(\00)$, having the diagonal matrix  $\text{diag}(\text{tr}(\Lambda),0,\dots,0)$ as canonical Jordan form, where \[\text{tr}(\Lambda)=a_1(\00)p_1+\cdots+a_d(\00)p_d=L(P)(\00).\] Thus, the only case for which $m\geq 1$, in fact, $m=1$, is when $L(P)(\00)\neq 0$. Furthermore, Poincar\'{e} condition (\ref{Poincare condition}) is satisfied if and only if \[\mu_k-nL(P)(\00)\neq 0,\quad\text{ for all } k=1,\dots,d,\, n\in\N.\] In the aforementioned papers, our situation ($m=0$ or $1$) is covered in Refs. \cite[Theorem 1.1]{Hibino99} and \cite[Theorem 1.2]{Hibino04}, claiming the solution is $(1,\dots,1)$--Gevrey while working in the variable $\xxi$, see Section \ref{Sec. Gevrey series in a germ} for definitions. For the case $m=0$, Theorem \ref{Thm Main Result} improves the divergence rate of the formal solution by showing it is $(1/k,\dots,1/k)$--Gevrey, where $k=o(P)$ is the order of $P$, see Proposition \ref{Prop. Ps sss Gevrey}. But more importantly, it identifies a possible variable to study summability phenomena. Finally, in the case $m=1$, $L(P)(\00)\neq 0$, the formal solution is convergent. In fact, by reordering the coordinates we can assume $a_1(\00) p_1\neq 0$, thus, $\xi_1=P(\xx), \xi_2=x_2,\dots, \xi_d=x_d$ is a local change of variables in which our differential operator takes the form \[P\cdot L=\xi_1\cdot \left(U(\xxi)\d_{\xi_1}+\overline{a}_2(\xxi)\d_{\xi_2}+\cdots+\overline{a}_d(\xxi)\d_{\xi_d}\right),\] where $\overline{a}_j(\xxi)=a_j(\xx)$, and $U(\xxi)=\overline{a}_1\d_{x_1}P+\cdots \overline{a}_d\d_{x_d}P$ is a unit since $U(\00)=L(P)(\00)$. Then, a standard majorant argument by working in the variable $\xi_1$ proves the convergence of the solution. We can also prove this by a slight modification in the proof of Theorem \ref{Thm Main Result}. In this way we find:

\begin{teor}\label{Thm 2} Assume the hypotheses of Theorem \ref{Thm Main Result}, but now suppose $L(P)(\00)\neq 0$. If $\mu-nL(P)(\00)I_N$ is invertible, for all $n\in\N$, then equation (\ref{Eq. Main Eq}) has a unique analytic solution at the origin $\widehat{\yy}\in \C\{\xx\}^N$.
\end{teor}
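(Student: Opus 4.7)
The plan is to re-run the argument for Theorem \ref{Thm Main Result} and observe that, under the new hypotheses, an extra $O(1/n)$ factor coming from the non-resonance assumption upgrades the Gevrey bound obtained there to a geometric one.

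One writes the formal candidate as $\widehat{\yy}=\sum_{n\geq 0} y_n(\xx)P(\xx)^n$ with the $y_n$ chosen in a fixed transversal complement to the ideal generated by $P$, as in the proof of Theorem \ref{Thm Main Result}. Substituting into (\ref{Eq. Main Eq}), expanding $F(\xx,\yy)$ in $\yy$, and identifying coefficients of $P^n$ yields a recurrence whose leading part is
\begin{equation*}
\bigl(nL(P)(\xx)I_N-\mu\bigr)\,y_n(\xx)=\Psi_n(y_0,\dots,y_{n-1}),
\end{equation*}
with $\Psi_n$ polynomial in the previous $y_k$ and their images under $L$, and with coefficients coming from the Taylor expansion of $F$. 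In Theorem \ref{Thm Main Result} the matrix on the left degenerates at $\xx=\00$ because $P\mid L(P)$ and $P(\00)=0$; that degeneracy is precisely what produces the $n!$ growth.

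Here instead $L(P)(\xx)$ is a unit on a small polydisc $D$ around $\00$, and the eigenvalues $nL(P)(\xx)-\mu_k$ of the operator on the left stay bounded away from $0$ uniformly in $\xx\in D$ and $n\in\N$: for small $n$ this is the non-resonance hypothesis transported by continuity, while for large $n$ it follows from $|nL(P)(\xx)-\mu_k|\sim n\,|L(P)(\00)|$. In particular the recurrence determines each $y_n$ uniquely on $D$, and the inverse $\bigl(nL(P)(\xx)I_N-\mu\bigr)^{-1}$ has operator norm bounded by $C/n$ for all $n\geq 1$.

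Feeding this estimate into the Nagumo-norm scheme used to prove Theorem \ref{Thm Main Result}, the recursive bound picks up an additional factor $1/n$ at each step, which compensates the factorial growth produced there by the iterated Nagumo shifts needed to absorb the derivative $L(y_{n-1})$. The outcome is an estimate of the shape $\|y_n\|_p\leq CA^n$ on a fixed polydisc $D'\subset D$, whence the series $\sum y_n(\xx)P(\xx)^n$ converges absolutely near $\00$ and represents an element of $\C\{\xx\}^N$. The main technical point is quantitative: one must verify that the $1/n$ gain is large enough, on a sufficiently small polydisc, to beat the Nagumo-norm loss inherent in applying the first-order operator $L$, so that the resulting geometric ratio is less than $1$.
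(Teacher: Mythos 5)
Your proposal is correct and follows essentially the same route as the paper: decompose $\widehat{\yy}=\sum y_nP^n$ with $y_n$ in a fixed transversal to $(P)$, obtain the recurrence with leading operator $\mu-nL(P)+A_0$ (the paper keeps the $A_0$ term as well, but since $A_0(\00)=\00$ this is immaterial), prove the $O(1/n)$ bound on its inverse by separating small $n$ (non-resonance plus continuity) from large $n$ (Neumann series), and feed that decay into the Nagumo-norm majorant scheme so the factor $k$ produced by Proposition~\ref{Prop Nagumo norms}(2) is absorbed and the majorant $Z(\tau)$ becomes analytic by the implicit function theorem. The only point worth making explicit in a full write-up is the index shift in the extra term $\sum_{k=1}^{n-1}kRQ^{n-k}(L(P)y_k)$, which still carries a factor $k$ but is compensated by the same $1/n$ gain.
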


The technique to prove Theorems \ref{Thm Main Result} and \ref{Thm 2} is based on modified Nagumo norms for several variables, as introduced in Ref. \cite{CDRSS}, joint with a generalized Weierstrass division theorem that allows to write a power series as a series in the germ $P$, although the decomposition depends on the monomial order employed. Due to the compatibility of these tools, we can use a majorant series argument to establish the results.

The structure of the paper is as follows: Sections \ref{Sec. Nagumo norms} and \ref{Sec. The division algorithm} contain the technical parts of the work where we explain the properties we need on modified Nagumo norms, the Weierstrass division theorem and their compatibility. In Section \ref{Sec. Gevrey series in a germ} we recall the notions of $(s,\dots,s)$-- and $P$-$s$--Gevrey series, $s\geq 0$, and we develop some properties relating them. Sections \ref{Sec. Proof of Theorem } and \ref{Sec. Higher order} contain the proof of Theorems \ref{Thm Main Result} and \ref{Thm 2} and a simple extension to higher order systems (Corollary \ref{Coro. 1}), respectively. Finally, Section \ref{Sec. Examples} encloses several examples, including one showing that the hypotheses of Theorem \ref{Thm Main Result} are necessary to conclude the desired Gevrey type.

\section{Nagumo norms}\label{Sec. Nagumo norms}

Let us start by fixing some notation: $\N$ is the set of natural numbers including $0$, $\N^+=\N\setminus\{0\}$, and $\R{^+}$ is the set of positive real numbers. For a coordinate $t$, we will write $\frac{\d}{\d t}=\d_t$ for the corresponding derivative. If $\boldsymbol{\beta}=(\beta_1,\dots,\beta_d)\in\N^d$, we use the multi-index notation $|\boldsymbol{\beta}|=\beta_1+\cdots+\beta_d$, $\bb!=\beta_1!\cdots\beta_d!$, $\boldsymbol{x}^{\boldsymbol{\beta}}=x_1^{\beta_1}\cdots x_d^{\beta_d}$ and  $\frac{\d^{\boldsymbol{\beta}}}{\d \boldsymbol{x}^{\boldsymbol{\beta}}}=\frac{\d^{|\boldsymbol{\beta}|}}{\d x_1^{\beta_1}\cdots \d x_d^{\beta_d}}$. 

Let $d\geq 1$ be an integer. We will work with $(\C^d,\00)$ and local coordinates $\xx=(x_1,\dots,x_d)$. We also write $\xx'=(x_2,\dots,x_d)$ when removing the first variable. $\widehat{\mathcal{O}}=\C[[\xx]]$ and  $\mathcal{O}=\C\{\xx\}$ denote the rings of formal and convergent power series in $\xx$ with complex coefficients, respectively. $\mathcal{O}^\ast=\{U\in\mathcal{O} \,:\, U(\00)\neq 0\}$ will denote the corresponding groups of units. Given $\hat{f}=\sum a_\bb \xx^\bb\in\widehat{\mathcal{O}}$, $o(\hat{f})$ will denote its order: if $\hat{f}=\sum_{n=0}^\infty {f}_n$, ${f}_n=\sum_{|\bb|=n} a_\bb \xx^\bb$, is written as sum of its homogeneous components, $o(\hat{f})$ is the least integer $k$ for which ${f}_k\neq 0$. 

For $\boldsymbol{r}=(r_1,\dots,r_d)\in(\R^+)^d$, $D_{\boldsymbol{r}}=\{\xx\in \C^d \,:\, |x_j|<r_j, j=1,\dots,d\}$ is the polydisc centered at the origin with polyradius $\boldsymbol{r}$. If $r_j=r$, for all $j$, we write $D_\rr=D_r^d$ as a Cartesian product instead. By using the norm $|\xx|:=\max_{1\leq j\leq d} |x_j|$, we can write $D_r^d=\{\xx\in\C^d : |\xx|<r\}$. Also, $\mathcal{O}(D_{\boldsymbol{r}})$ and $\mathcal{O}_b(D_{\boldsymbol{r}})$ will denote the sets of holomorphic and bounded holomorphic $\C$-valued functions on the given polydisc. We denote by $J:\mathcal{O}(D_{\boldsymbol{r}})^N\rightarrow \mathcal{O}^N$ the Taylor map sending a vector function to its Taylor series at the origin.

Nagumo norms were introduced originally by M. Nagumo in Ref. \cite{Nagumo42} in his study of analytic partial differential equations. There are other alternative versions that have been used successfully in this context, see, e.g., \cite{BalserLoday09,Remy20}. We will use a variant as it appears in Ref. \cite{CDRSS} for the case of one complex variable. Let us fix two numbers $0<\rho<r$ and consider the function \[d_r(x)=\left\{\begin{array}{ll}
	r-|x|&\text{ if } |x|\geq \rho,\\
	r-\rho&\text{ if } |x|<\rho,
\end{array}\right.\] which satisfies \begin{equation}\label{Eq dx-dx}
	|d_r(x)-d_r(y)|\leq |x-y|,\quad x,y\in D_r.
\end{equation} The number $\rho$ can be chosen arbitrarily but we will always take $\rho=r/2$.

Fix a polyradius $\rr=(r_1,\dots,r_d)$. If $f\in\mathcal{O}(D_\rr)$ and $m\in\N$, we consider the family of \textit{Nagumo norms} \begin{equation}\label{Nagumo n} \|f\|_m:=\sup_{\xx\in D_\rr} |f(\xx)|d_{r_1}(x_1)^m\cdots d_{r_d}(x_d)^m.
\end{equation} These norms depend on $\rr$, but to simplify notation we omit this dependence. There is no reason for these values to be finite, for instance, if $m=0$ this norm reduced to the maximum norm. Note that if $\|f\|_k$ is finite and $m>k$, then \begin{equation}\label{Eq fm f0}
	\|f\|_m\leq (r_1/2)^{m-k}\cdots (r_d/2)^{m-k} \|f\|_k.
\end{equation} In particular, if $k=0$, i.e., if $f\in\mathcal{O}_b(D_\rr)$, all its Nagumo norms are finite.

We collect in the next proposition the main properties of these norms we will use in the proof of Theorems \ref{Thm Main Result} and \ref{Thm 2}, including their behavior under the shift operators \begin{equation}\label{Eq. Shifts}
	S_j(f)(\xx)=\left\{\begin{array}{ll}
		\left(f(\xx)-f(x_1,\dots,x_{j-1},0,x_{j+1},\dots,x_d)\right)/x_j&\text{ if } x_j\neq 0,\\
		\frac{\d f}{\d{x_j}}(\xx)&\text{ if } x_j=0.
	\end{array}\right.
\end{equation}

\

\begin{prop}\label{Prop Nagumo norms} Fix $m,k\in \N$ and $1\leq j\leq d$. If $f,g\in\mathcal{O}(D_\rr)$, then
	
	\begin{enumerate}[label=(\roman*)]
		\item $\|f+g\|_m\leq \|f\|_m+\|g\|_m$ and $\|fg\|_{m+k}\leq \|f\|_m\|g\|_k$. 
		\item $\left\|\frac{\d f}{\d x_j}\right\|_{m+1}\leq e(m+1)\prod_{i\neq j} (r_i/2)\|f\|_m$.
		\item $\|S_j(f)\|_m\leq \frac{4}{r_j}\|f\|_m$.
	\end{enumerate}
	
\end{prop}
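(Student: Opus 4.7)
The plan is to treat the three claims separately. For (1) I will argue directly from the definition: the additive estimate is the triangle inequality $|f+g|\le|f|+|g|$ combined with the subadditivity of $\sup$, while the submultiplicative one follows by splitting $\prod d_{r_i}^{m+k}=\prod d_{r_i}^m\cdot\prod d_{r_i}^k$, using $|fg|=|f||g|$ pointwise, and then factoring the resulting supremum.

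For (2) my plan is to fix $\xx\in D_\rr$ and apply Cauchy's integral formula to $f$ in the $j$-th variable only, on the circle $|w-x_j|=\delta$ with $\delta$ to be optimized, keeping the other coordinates frozen. The Lipschitz estimate (\ref{Eq dx-dx}) guarantees $d_{r_j}(w)\ge d_{r_j}(x_j)-\delta$ on this circle, so the Nagumo bound $|f(\ldots,w,\ldots)|\le\|f\|_m/(d_{r_j}(w)^m\prod_{i\ne j}d_{r_i}(x_i)^m)$ combined with the standard Cauchy estimate produces, after multiplying by $\prod_i d_{r_i}(x_i)^{m+1}$, the factor $d_{r_j}(x_j)^{m+1}/[\delta\,(d_{r_j}(x_j)-\delta)^m]$ in the $j$-direction and $\prod_{i\ne j}d_{r_i}(x_i)\le\prod_{i\ne j}(r_i/2)$ for the rest. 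Optimizing at $\delta=d_{r_j}(x_j)/(m+1)$ turns the first factor into $(m+1)^{m+1}/m^m\le e(m+1)$, giving the claim.

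The hardest step will be (3), because a single Cauchy contour in the $x_j$-plane loses a factor of $2^m$, so uniformity in $m$ forces a more subtle argument. My plan is a case split on $|x_j|$. When $|x_j|\ge r_j/2$, I will use the triangle inequality $|S_j(f)(\xx)|\le(|f(\xx)|+|f(\xx_0)|)/|x_j|$, where $\xx_0$ denotes $\xx$ with $x_j$ replaced by $0$; the Nagumo bound on $f$, together with the identity $d_{r_j}(0)=r_j/2\ge d_{r_j}(x_j)$, yields $|S_j(f)(\xx)|\prod d_{r_i}^m\le 2\|f\|_m/|x_j|\le 4\|f\|_m/r_j$. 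When $|x_j|<r_j/2$ the key observation is that $z\mapsto S_j(f)(x_1,\ldots,z,\ldots,x_d)$ is holomorphic on $D_{r_j}$, so the maximum modulus principle on the disc $|z|\le r_j/2$ reduces the estimate to the circle $|z|=r_j/2$, where the first case applies and gives $|S_j(f)(\ldots,z,\ldots)|\le 4\|f\|_m/(r_j(r_j/2)^m\prod_{i\ne j}d_{r_i}(x_i)^m)$; since $d_{r_j}(x_j)^m=(r_j/2)^m$ in this regime, multiplying through recovers $|S_j(f)(\xx)|\prod d_{r_i}^m\le 4\|f\|_m/r_j$, completing the proof.
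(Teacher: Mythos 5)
Your proposal matches the paper's proof step for step: (1) directly from the definition, (2) via Cauchy's formula in the single coordinate $x_j$ with the Lipschitz estimate on $d_{r_j}$, the radius $\delta=d_{r_j}(x_j)/(m+1)$, and the bound $(1+1/m)^m<e$, and (3) via the case split at $|x_j|=r_j/2$ with the triangle inequality in the outer regime and the maximum modulus principle in the inner one. The argument is correct and essentially identical to the one given in the paper.
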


\begin{proof} The inequalities in (i) are clear from the definition. We prove (ii) and (iii) for the variable $x_1$. If $\xx=(x_1,\xx')\in D_\rr$, we have \begin{equation}\label{Eq f fm} |f(\xx)|d_{r_1}(x_1)^m\cdots d_{r_d}(x_d)^m\leq \|f\|_m.
	\end{equation} To establish (ii), we use Cauchy's formula \begin{equation}\label{Eq. f' f}
		\left|\frac{\d f}{\d x_1}(\xx)\right|=\left|\frac{1}{2\pi}\int_{|\xi-x_1|=R}\frac{f(\xi,\xx')}{(\xi-x_1)^2}d\xi\right|\leq \frac{1}{R}\sup_{|\xi-x_1|=R} |f(\xi,\xx')|,
	\end{equation} valid for $0<R<r-|x_1|$. If $|\xi-x_1|=R$, then $d_{r_1}(x_1)-R\leq d_{r_1}(\xi)$ by applying inequality (\ref{Eq dx-dx}). In particular, if $0<R<d_{r_1}(x_1)$ we find \[|f(\xi,\xx')|\leq \|f\|_m d_{r_2}(x_2)^{-m}\cdots d_{r_d}(x_d)^{-m} (d_{r_1}(x_1)-R)^{-m}.\] For the case $m>0$, choose $R=\frac{d_{r_1}(x_1)}{m+1}$ to find  \[\left|\frac{\d f}{\d x_1}(\xx)\right|\leq \frac{(m+1)\|f\|_m}{d_{r_1}(x_1)^{m+1}d_{r_2}(x_2)^m\cdots d_{r_d}(x_d)^m}\left(1+\frac{1}{m}\right)^m.\] Therefore, using the inequality $(1+1/m)^m<e$ we conclude that \begin{align*}
		\left|\frac{\d f}{\d x_1}(\xx)d_{r_1}(x_1)^{m+1}\cdots d_{r_d}(x_d)^{m+1}\right|&\leq e(m+1) d_{r_2}(x_2)\cdots d_{r_d}(x_d)\|f\|_m\\
		&\leq e(m+1)\left(\frac{r_2}{2}\cdots \frac{r_d}{2}\right)\|f\|_m,
	\end{align*} as we wanted to show. Now, if $m=0$, taking $R=d_{r_1}(x_1)/e$ in (\ref{Eq. f' f}) we find \[\left|\frac{\d f}{\d x_1}(\xx)d_{r_1}(x_1)\cdots d_{r_d}(x_d)\right|\leq e\left(\frac{r_2}{2}\cdots \frac{r_d}{2}\right)\|f\|_0,\] as desired.	
	
	For (iii), by inequality (\ref{Eq f fm}), $|f(0,\xx')|$ is less than 
	\[\|f\|_m ((r_1/2)d_{r_2}(x_2)\cdots d_{r_d}(x_d))^{-m}\\
	\leq \|f\|_m (d_{r_1}(x_1)d_{r_2}(x_2)\cdots d_{r_d}(x_d))^{-m},\] for all $\xx\in D_\rr$. Hence, if $|x_1|\geq r_1/2$, \[\left|\frac{f(x_1,\xx')-f(0,\xx')}{x_1}\right|\leq \frac{4}{r_1}\|f\|_m d_{r_1}(x_1)^{-m}\cdots d_{r_d}(x_d)^{-m}.\] For $|x_1|<r_1/2$ we can use the maximum modulus principle and the estimate above to see that \[|S_1(f)(\xx)|\leq \max_{|\xi|=r_1/2} |S_1(f)(\xi,\xx')|\leq \frac{4}{r_1}\|f\|_m ((r_1/2)d_{r_2}(x_2)\cdots d_{r_d}(x_d))^{-m}.\] Since $d_{r_1}(x_1)=r_1/2$ if $|x_1|<r_1/2$, we find in all cases that $|S_1(f)(\xx)d_{r_1}(x_1)^m\cdots d_{r_d}(x_d)^m|\leq (4/r_1) \|f\|_m$ as required.
\end{proof}

For vector--valued $\boldsymbol{y}=(y_1,\dots,y_N)\in\mathcal{O}(D_{\rr})^N$, and matrix--valued $A=(A_{i,j})\in\mathcal{O}(D_{\rr})^{N\times N}$ maps, we extend Nagumo norms by the rules \begin{equation}\label{Eq. Nagumo N}
	\|\yy\|_m:=\max_{1\leq i\leq N} \|y_i\|_m,\quad \|A\|_m:=\max_{1\leq i\leq N} \sum_{j=1}^N \|A_{i,j}\|_m.
\end{equation} Then, it is immediate to check that \[\|f\cdot\yy\|_{m+k}\leq \|f\|_m\|\yy\|_k,\quad \|A\cdot\yy\|_{m+k}\leq \|A\|_m\|\yy\|_k,\] \[\|A\cdot B\|_{m+k}\leq \|A\|_m\|B\|_k,\] for all $f\in\mathcal{O}(D_\rr)$, $\yy\in\mathcal{O}(D_\rr)^N$, and $A,B\in\mathcal{O}(D_\rr)^{N\times N}$.

\section{The division algorithm}\label{Sec. The division algorithm}

We recall here a generalized Weierstrass division theorem by following closely Ref. \cite{MS19}, and whose original version is due to J.M. Aroca, H. Hironaka, and J. L. Vicente, see Ref. \cite{AHV75}. For the sake of completeness we include the proof for convergent series including the compatibility of the division algorithm with the Nagumo norms introduced in Section \ref{Sec. Nagumo norms}.

We will use the partial order $\leq$ on $\N^d$ defined by $\aa\leq \bb$ if  $\a_j\leq \beta_j$, for all $j=1,\dots, d$. Thus $\aa\not\leq \bb$ means there is an index $j$ such that $\beta_j<\a_j$. We also use the notation \[\Delta_\aa:=\left\{ \sum g_\bb \xx^{\bb}\in\widehat{\mathcal{O}} : g_\bb=0 \text{ if } \aa\leq \bb\right\}.\]

Given $\boldsymbol{\a}=(\a_1,\dots,\a_d)\in\N^{d}\setminus \{\00\}$, a power series  $\hat{f}=\sum_{\bb\in\N^d} f_\bb \xx^\bb\in \widehat{\mathcal{O}}$ can be written uniquely as a series in the monomial $\xx^\aa$ as \begin{equation}\label{Eq decomp f xa}
	\hat{f}=\sum_{n=0}^\infty \hat{f}_{\aa,n}(\boldsymbol{x})\boldsymbol{x}^{n\boldsymbol{\a}},\quad \hat{f}_{\aa,n}(\xx)=\sum_{\aa\not\leq\bb}f_{n\boldsymbol{\a}+\boldsymbol{\beta}}\boldsymbol{x}^{\boldsymbol{\beta}}\in \Delta_\aa.
\end{equation} This decomposition is obtained by a repeated use of the canonical division algorithm by $\xx^\aa$: given $\hat{f}\in\widehat{\mathcal{O}}$, there are unique $q\in\widehat{\mathcal{O}}$, $r\in \Delta_\aa$ such that \[\hat{f}=q \xx^\aa+r,\quad \text{ where } \quad q=\sum_{\aa\leq \bb} f_\bb x^{\bb-\aa},\quad  r=\sum_{\aa\not\leq\bb} f_\bb \xx^\bb.\] Moreover, if $f\in\mathcal{O}(D_\rr)$, then $q,r\in\mathcal{O}(D_\rr)$. Actually, we can use the shift operators (\ref{Eq. Shifts}) to write \[q=Q_\aa(f):=S_1^{\a_1}\circ \cdots \circ S_d^{\a_d}(f),\quad r=R_\aa(f):=f-Q_\aa(f)\cdot \xx^{\aa}.\] In particular, Proposition \ref{Prop Nagumo norms} (iii) shows that \begin{equation}\label{Eq Qa Ra}
	\|Q_\aa(f)\|_m\leq  \frac{4^{|\aa|}}{\rr^\aa}\|f\|_m,\quad \|R_\aa(f)\|_m\leq(1+4^{|\aa|})\|f\|_m,
\end{equation} for all $m\in\N$. By taking $m=0$ we conclude $Q_\aa, R_\aa: \mathcal{O}_b(D_\rr)\to \mathcal{O}_b(D_\rr)$ are linear continuous maps.

The generalized Weierstrass division allows to extend the previous considerations by dividing by an element of $\widehat{\mathcal{O}}\setminus\{0\}$ with zero constant term, but not in a canonical way. We will focus on division by an analytic germ  $\PP \in \mathcal{O}\setminus\{0\}$, $\PP(\00)= 0$. The division is determined by $P$ and an injective linear form $\ell:\N^d\rightarrow\R^+$,  $\ell(\aa)=\ell_1\a_1+\cdots+\ell_d\a_d$ used to order the monomials by the rule  \[\xx^\aa<_\ell\xx^\bb\quad \text{ if }\quad \ell(\aa)<\ell(\bb).\] Then, any $\hat{f}=\sum f_\bb \xx^\bb\in\widehat{\mathcal{O}}\setminus\{0\}$, has a \textit{minimal exponent} $\nu_\ell(\hat{f})$ with respect to $\ell$, i.e., $\nu_\ell(\hat{f})=\aa$ where $\xx^\aa=\min_\ell\{\xx^\bb : f_\bb\neq 0\}$, and the minimum is taken according to $<_\ell$. The division process, for formal and convergent series, can be stated as follows, c.f., Ref. \cite[Lemmas 2.4, 2.6]{MS19}.

\begin{prop}[Generalized Weierstrass Division] \label{Generalized Weierstrass Division} Let $P$ and $\ell$ as above. For every $\hat{g}\in\widehat{\mathcal{O}}$, there are unique $\hat{q}\in\widehat{\mathcal{O}}$, $\hat{r}\in \Delta_{\nu_\ell(P)}$ such that \[\hat{g}=\hat{q}P+\hat{r}.\] Moreover, if $\rho>0$ is sufficiently small, then for every $g\in\mathcal{O}_b(D_{\rho(\ell)})$, $\rho(\ell)=(\rho^{\ell_1},\dots,\rho^{\ell_d})$, there are unique $q\in \mathcal{O}_b(D_{\rho(\ell)})$, $r\in \mathcal{O}_b(D_{\rho(\ell)})$ with $J(r)\in \Delta_{\nu_\ell(P)}$ such that \[g=qP+r,\quad Q_{P,\ell}(g):=q,\quad  R_{P,\ell}(g):=r.\] The corresponding operators $Q_{P,\ell}, R_{P,\ell}:\mathcal{O}_b(D_{\rho(\ell)})\rightarrow \mathcal{O}_b(D_{\rho(\ell)})$ are linear and continuous. In fact, if $\rho$ is sufficiently small, then \[\|Q_{P,\ell}(g)\|_m\leq \frac{2\cdot 4^{|\nu_\ell(P)|}}{\rho^{\ell(\nu_\ell(P))}}\|g\|_m,\quad \|R_{P,\ell}(g)\|_m\leq 2(1+4^{|\nu_\ell(P)|})\|g\|_m,\] for all  $m\in\N$.
\end{prop}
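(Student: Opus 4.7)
The plan is to reduce the division problem to a Neumann-series fixed-point equation for a carefully chosen operator built from the canonical division by the leading monomial. After dividing $P$ by its $\ell$-leading coefficient, I would write $P=\xx^{\aa_0}+\tilde P$, where $\aa_0:=\nu_\ell(P)$ and every exponent $\bb$ occurring in $\tilde P$ satisfies $\ell(\bb)>\ell(\aa_0)$. Set $\aa_1:=\nu_\ell(\tilde P)$ and $\delta:=\ell(\aa_1)-\ell(\aa_0)>0$. The equation $\hat g=\hat q P+\hat r$ with $\hat r\in \Delta_{\aa_0}$ is equivalent to the canonical division of $\hat g-\hat q \tilde P$ by $\xx^{\aa_0}$, and introducing $T(q):=Q_{\aa_0}(q\tilde P)$ it reduces to the fixed-point equation $(I+T)\hat q=Q_{\aa_0}(\hat g)$, with $\hat r:=R_{\aa_0}(\hat g-\hat q\tilde P)$ automatically landing in $\Delta_{\aa_0}$.

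For the formal statement, I would observe that $T$ strictly raises the $\ell$-minimal exponent by at least $\delta$: whenever $T(q)\neq 0$, one has $\ell(\nu_\ell(T(q)))\geq \ell(\nu_\ell(q))+\delta$. Consequently, the Neumann series $\hat q:=\sum_{n\geq 0}(-T)^n Q_{\aa_0}(\hat g)$ converges coefficient-by-coefficient in $\widehat{\mathcal O}$ and solves the fixed-point equation. Uniqueness is then immediate: if $(\hat q_1-\hat q_2)P\in\Delta_{\aa_0}$, its $\ell$-leading exponent equals $\nu_\ell(\hat q_1-\hat q_2)+\aa_0$, which is coordinate-wise $\geq \aa_0$, contradicting the definition of $\Delta_{\aa_0}$ unless $\hat q_1=\hat q_2$.

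For the convergent case with Nagumo bounds, I would work on $D_{\rho(\ell)}$ and combine Proposition \ref{Prop Nagumo norms}(1) with the estimate (\ref{Eq Qa Ra}) for $Q_{\aa_0}$ to get
$$\|T(q)\|_m\leq \frac{4^{|\aa_0|}}{\rho^{\ell(\aa_0)}}\,\|\tilde P\|_0\,\|q\|_m,\qquad m\in\N.$$
The main obstacle I anticipate is showing that $\|\tilde P\|_0/\rho^{\ell(\aa_0)}\to 0$ as $\rho\to 0^+$: since $P\in\C\{\xx\}$ yields Cauchy estimates $|p_\bb|\leq M R^{-|\bb|}$ on some fixed $D_R^d$, and every $\bb$ contributing to $\tilde P$ has $\ell(\bb)\geq \ell(\aa_0)+\delta$, a geometric summation valid for $\rho$ small enough that $\rho^{\ell_j}/R<1$ for all $j$ gives $\|\tilde P\|_0\leq C\rho^{\ell(\aa_0)+\delta}$ for some constant $C>0$. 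This reconciliation of the analytic coefficient bound with the $\ell$-weighted geometry of $D_{\rho(\ell)}$ is the delicate step. Once available, fixing $\rho$ small enough that $\|T\|_{m,\mathrm{op}}\leq 1/2$ uniformly in $m$, the operator $I+T$ is invertible with operator norm of its inverse at most $2$, which yields the stated bound on $Q_{P,\ell}(g)=\hat q$. The companion bound on $R_{P,\ell}(g)=\hat r$ then follows from $\hat r=R_{\aa_0}(g)-R_{\aa_0}(\hat q\tilde P)$, (\ref{Eq Qa Ra}), and the estimate $\|\hat q\|_m\|\tilde P\|_0\leq \|g\|_m$ produced by the previous step.
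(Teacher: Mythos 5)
Your proposal matches the paper's proof in all essentials: you normalize $P=\xx^{\aa_0}+\tilde P$ with $\aa_0=\nu_\ell(P)$, reduce the division to the fixed-point equation $q=Q_{\aa_0}(g-q\tilde P)$, and exploit the estimate $\|\tilde P\|_0\lesssim \rho^{\ell(\aa_0)+\delta}$ on $D_{\rho(\ell)}$ to make the relevant operator a contraction for $\rho$ small, from which the stated bounds on $Q_{P,\ell}$ and $R_{P,\ell}$ follow. Phrasing it as Neumann-series inversion of $I+T$ rather than Banach's fixed-point theorem, and spelling out the formal uniqueness via the strict increase of the $\ell$-minimal exponent, are only cosmetic differences from the argument in the paper.
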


\begin{proof} By the choice of $\rho(\ell)$, we have $|\xx^\bb|\leq \rho^{\ell(\bb)}$ if $\xx\in D_{\rho(\ell)}$. Let us write $\aa=\nu_\ell(P)$. Without loss of generality we can assume $P=\xx^\aa+\widetilde{P}$, where $\widetilde{P}\in\mathcal{O}\setminus\{0\}$ and $\nu_\ell(\widetilde{P})>_\ell \xx^\aa$. Then, solving $g=qP+r$ or $q\xx^\aa+r=g-q\widetilde{P}$ for $q$ and $r$ is equivalent to find a fixed point for  \begin{equation}\label{Eq Aux q}
		q=Q_\aa(g-q\widetilde{P}),
	\end{equation} joint with $r=R_\aa(g-q\widetilde{P})$. Note that if $\rho$ is sufficiently small, we can choose a constant $K>0$ such that  $\|\widetilde{P}\|_0\leq K\rho^{\ell(\nu_\ell(\widetilde{P}))}$. Consider the map $\phi_g:\mathcal{O}_b(D_{\rho(\ell)})\to \mathcal{O}_b(D_{\rho(\ell)})$ given by $\phi_g(h)=Q_\aa(g-h\widetilde{P})$. By using the first inequality in (\ref{Eq Qa Ra}) for $m=0$ we see that \[\|\phi_g(h_1)-\phi_g(h_2)\|_0=\left\|Q_\aa((h_1-h_2)\widetilde{P})\right\|_0\leq 4^{|\aa|}K\rho^{\ell(\nu(\widetilde{P}))-\ell(\aa)}\|h_1-h_2\|_0.\] Thus, $\phi_g$ defines a contraction if $\rho$ is small enough, i.e., if  $4^{|\aa|}K\rho^{\ell(\nu_\ell(\widetilde{P}))-\ell(\aa)}<1$, and it has a unique fixed point $q$. This determines the existence and uniqueness of $q$ and $r$. Finally, from equation (\ref{Eq Aux q}) we find that \[\|Q_{P,\ell}(g)\|_m\leq \frac{4^{|\aa|}/\rho^{\ell(\aa)}}{1-4^{|\aa|}K\rho^{\ell(\nu_\ell(\widetilde{P}))-\ell(\aa)}}\|g\|_m,\] and from   $r=R_\aa(g-q\widetilde{P})$, that \begin{align*}
		\|R_{P,\ell}(g)\|_m&\leq (1+4^{|\aa|})\left(\! 1+\frac{\|\widetilde{P}\|_04^{|\aa|}/\rho^{\ell(\aa)}}{1-4^{|\aa|}K\rho^{\ell(\nu_\ell(\widetilde{P}))-\ell(\aa)}}\!\right)\!\|g\|_m\\
		&=\frac{(1+4^{|\aa|})\|g\|_m}{1-4^{|\aa|}K\rho^{\ell(\nu_\ell(\widetilde{P}))-\ell(\aa)}}.
	\end{align*} Therefore, the result follows by taking additionally $\rho>0$ such that $4^{|\aa|}K\rho^{\ell(\nu_\ell(\widetilde{P}))-\ell(\aa)}<1/2$.
\end{proof}

By a repeated application of the previous proposition, see Ref. \cite[Corollary 2.5]{MS19}, any $\hat{f}\in\widehat{\mathcal{O}}$ can be written uniquely as \begin{equation}\label{Eq. Decomposition formal}
	\hat{f}=\sum_{n=0}^\infty \hat{f}_{P,\ell,n} P^n,\quad \hat{f}_{P,\ell,n}\in \Delta_{\nu_\ell(P)}.
\end{equation} For the convergent case, we have a similar result, see Ref. \cite[Corollary 2.7]{MS19}.

\begin{coro}\label{Coro Decomposition convergent}
	If $s>0$ is such that the operators $Q_{P,\ell}$ and $R_{P,\ell}$ are defined over $\mathcal{O}_b(D_{s(\ell)})$, there is $r=r(s)>0$, depending only on $s$, such that for any $f\in\mathcal{O}_b(D_{s(\ell)})$ we can find a unique sequence $(f_n)_{n\in\N}\subset \mathcal{O}_b(D_r^d)$ with $J(f_n)\in \Delta_{\nu_\ell(P)}$, such that \begin{equation}\label{Eq. Decomposition convergent}
		f=\sum_{n=0}^\infty f_n \PP^n,\quad  f_n=R_{P,\ell}\circ Q_{P,\ell}^n(f),\quad \text{both convergent for } |\xx|<r.
	\end{equation} 
\end{coro}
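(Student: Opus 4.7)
The plan is to construct the sequence $(f_n)$ by iterating the generalized Weierstrass division provided by Proposition \ref{Generalized Weierstrass Division}, and then to prove convergence of the resulting series by exploiting the fact that $P(\00)=0$ so that $|P(\xx)|$ can be made arbitrarily small on a sufficiently small polydisc, which will defeat the exponential growth of the Nagumo norms of the remainders.

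First, I would set $f_0:=R_{P,\ell}(f)$ and, inductively, $f_n:=R_{P,\ell}(Q_{P,\ell}^n(f))$, noting that by Proposition \ref{Generalized Weierstrass Division} each $f_n$ lies in $\mathcal{O}_b(D_{s(\ell)})$ with Taylor series in $\Delta_{\nu_\ell(P)}$. Applying the division $N+1$ times gives
\begin{equation*}
f=\sum_{n=0}^{N}f_n P^n+Q_{P,\ell}^{N+1}(f)\cdot P^{N+1},
\end{equation*}
which is the natural candidate partial sum. The bounds from Proposition \ref{Generalized Weierstrass Division} applied with $m=0$ yield constants $C_1,C_2>0$ (depending only on $s$) such that
\begin{equation*}
\|Q_{P,\ell}^{n}(f)\|_0\leq C_1^{n}\|f\|_0,\qquad \|f_n\|_0\leq C_2\,C_1^{n}\|f\|_0,
\end{equation*}
so the series $\sum f_n P^n$ has exponentially bounded coefficient norms on $D_{s(\ell)}$.

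Second, I would restrict to a smaller cubic polydisc $D_r^d\subset D_{s(\ell)}$, taking $r=r(s)>0$ such that $r\leq\min_j s^{\ell_j}$ (so the inclusion holds) and, crucially, such that $C_1\cdot\sup_{\xx\in D_r^d}|P(\xx)|<1/2$. This last condition is achievable because $P$ is continuous and vanishes at $\00$, so $|P(\xx)|\to 0$ uniformly on $D_r^d$ as $r\to 0$. With this choice, the tail estimate
\begin{equation*}
\Big|\sum_{n\geq N+1}f_n(\xx)P(\xx)^n\Big|\leq C_2\|f\|_0\sum_{n\geq N+1}\big(C_1|P(\xx)|\big)^n
\end{equation*}
shows absolute and uniform convergence on $D_r^d$ and also that the remainder $Q_{P,\ell}^{N+1}(f)\cdot P^{N+1}$ tends to zero uniformly; passing to the limit in the identity above gives $f=\sum_{n=0}^\infty f_n P^n$ on $D_r^d$.

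Finally, uniqueness of the sequence $(f_n)$ in $\mathcal{O}_b(D_r^d)$ with $J(f_n)\in\Delta_{\nu_\ell(P)}$ follows immediately from the uniqueness of the formal decomposition (\ref{Eq. Decomposition formal}): any convergent decomposition of the stated form induces, via the Taylor map $J$, a formal decomposition of $J(f)$, which must coincide with (\ref{Eq. Decomposition formal}), forcing the $f_n$ to be uniquely determined. The main technical obstacle I anticipate is the joint calibration of $r$ with the constants $C_1,C_2$ coming from Proposition \ref{Generalized Weierstrass Division}, since $C_1$ depends on the \emph{original} polyradius $s$ (and blows up if $s$ is made smaller), while the smallness of $|P|$ is gained by shrinking $r$; the argument works precisely because these two parameters are decoupled, $s$ being fixed first and $r$ then chosen small enough to dominate $C_1$ through the vanishing of $P$ at the origin.
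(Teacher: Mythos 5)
Your proposal is correct and follows essentially the same route as the paper's proof: iterate the Weierstrass division to get $f=\sum_{n=0}^{N-1}f_nP^n+Q_{P,\ell}^N(f)P^N$, bound $\|Q_{P,\ell}^N(f)\|_0$ on the fixed polydisc $D_{s(\ell)}$ by $\|Q_{P,\ell}\|^N\|f\|_0$, and then shrink $r$ so that $\|Q_{P,\ell}\|\cdot\sup_{D_r^d}|P|<1$, forcing the remainder to vanish uniformly. The extra observations you add — the geometric tail bound for the series itself and the reduction of uniqueness to the formal decomposition (\ref{Eq. Decomposition formal}) — are harmless elaborations of what the paper leaves implicit.
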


\begin{proof} By applying Proposition \ref{Generalized Weierstrass Division} we obtain \[f=\sum_{n=0}^{N-1} R_{P,\ell}(Q_{P,\ell}^n(f))P^n+Q^N(f) P^N,\quad \text{ for all } N\in\N.\] If we choose $0<r\leq s$ with $M=\sup_{|\xx|<r} |P(\xx)|<s^{\ell(\nu_\ell(P))}/(2\cdot 4^{|\nu_\ell(P)|})=1/b$, then we can estimate  \[\sup_{|\xx|<r} \left|f(\xx)-\sum_{n=0}^{N-1} R_{P,\ell}(Q_{P,\ell}^n(f))(\xx)P(\xx)^n\right|\leq (bM)^N \sup_{\yy\in D_{s(\ell)}} |f(\yy)|.\] The result follows by taking $N\to+\infty$.
\end{proof}

To finish this section we remark that if $\hat{f}_l=\sum_{n=0}^\infty  f_{l,n} P^n$, $l=1,\dots,m$, where $f_{l,n}\in \mathcal{O}_b(D_r^d)$ for a common $r$, decomposition (\ref{Eq. Decomposition formal}) for their product is given by \begin{align}
	\nonumber \hat{f}_1\cdots\hat{f}_m=&\sum_{j_1,\dots,j_m\geq 0} f_{1,j_1}\cdots f_{m,j_m} P^{j_1+\cdots+j_m}\\
	\nonumber =&\sum_{k,j_1,\dots,j_m\geq 0} R_{P,\ell}(Q^k_{P,\ell}(f_{1,j_1}\cdots f_{m,j_m})) P^{k+j_1+\cdots+j_m}\\
	\label{Eq. ProductS}=&\sum_{n=0}^\infty \left[\sum_{{k+j_1+\cdots+j_m=n}\atop {k,j_1,\dots,j_m\geq 0}} R_{P,\ell}(Q^k_{P,\ell}(f_{1,j_1}\cdots f_{m,j_m}))\right] P^{n}.
\end{align} In particular, note that for $n=0$ we find that  \begin{equation}\label{Eq. RRR} R_{P,\ell}(\hat{f}_1\cdots \hat{f}_m)=R_{P,\ell}(f_{1,0}\cdots f_{m,0}).\end{equation} Also,  note that for two factors the previous sum takes the form \begin{align}\label{Eq. Product}
	\hat{f}_1\cdot \hat{f}_2=\sum_{n=0}^\infty \Big(\sum_{k=0}^n\sum_{j=0}^k R_{P,\ell} (Q_{P,\ell}^{n-k}(f_{1,j} f_{2,k-j}))\Big)P^n.
\end{align}

\section{Gevrey series}\label{Sec. Gevrey series in a germ}

Given $\boldsymbol{s}=(s_1,\dots,s_d)\in\R_{\geq 0}^d$ and $\hat{f}=\sum_{\bb\in\N^d} a_\bb \xx^\bb\in\widehat{\mathcal{O}}$, the series $\hat{f}$ is said to be a \textit{$\boldsymbol{s}$--Gevrey} if we can find $C,A>0$ such that $|a_\bb|\leq CA^{|\bb|} \bb!^{\boldsymbol{s}}$, for all $\bb\in\N^d$. Note that $\boldsymbol{s}=\00$ means convergence. We will be interested in the case $s_1=\dots=s_d=s\geq 0$. Thanks to the inequalities \[\bb!\leq |\bb|!\leq d^{|\bb|}\bb!,\] a series  $\hat{f}$ is $(s,\dots,s)$--Gevrey if and only if there are $C,A>0$ such that \[|a_\bb|\leq CA^{|\bb|} |\bb|!^s,\quad \bb\in\N^d.\] We denote by $\widehat{\mathcal{O}}_{s}$ the set of $(s,\dots,s)$--Gevrey series.

\begin{nota} For any $s\geq0$, $\widehat{\mathcal{O}}_{s}$ is closed under sums, products, partial derivatives, composition, and it contains $\mathcal{O}$. This properties can be seen as a particular case in the setting of ultradifferentiable functions. In that framework, the Gevrey sequence $(n!^s)_{n\in\N}$ is generalized by a sequence of positive numbers $(M_n)_{n\in\N}$ satisfying log-convexity ($M_n^2\leq M_{n-1}M_{n+1}$), stability under derivatives ($M_{n+1}\leq K^nM_n$ for some $K>0$) and the condition $M_n^{1/n}\to +\infty$ as $n\to\infty$, see e.g., Refs.  \cite{Thilliez20008,RainerG2016} including other stability properties in a more general context.
\end{nota}

According to the previous remark, if $\hat{f}\in\widehat{\mathcal{O}}_{s}$, the same is true for $\hat{f}(A\xx)$, for all matrices $A\in\C^{d\times d}$, c.f., Ref. \cite[Lemma 2.1]{Hibino99}. In particular, we highlight the following simple statement we will need later.

\begin{lema}\label{Lemma linear change} Let $s\geq 0$. Then, $\hat{f}\in\widehat{\mathcal{O}}_{s}$ if and only if $\hat{f}(A\xx)\in \widehat{\mathcal{O}}_{s}$, for all $A\in\text{GL}_d(\C)$.
\end{lema}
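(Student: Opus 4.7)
The plan is to prove only the forward direction: if $\hat{f}\in\widehat{\mathcal{O}}_s$ then $\hat{g}:=\hat{f}(A\xx)\in\widehat{\mathcal{O}}_s$ for every invertible $A\in\text{GL}_d(\C)$ (the statement is phrased for $\text{GL}_n$ but clearly $n=d$ here). Once this is established, the reverse implication follows by writing $\hat{f}(\xx)=\hat{g}(A^{-1}\xx)$ and applying the same estimate to the matrix $A^{-1}\in\text{GL}_d(\C)$.

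For the forward direction, I would expand $\hat{f}=\sum_{\bb\in\N^d}a_\bb\xx^\bb$ with the Gevrey bound $|a_\bb|\leq C A_0^{|\bb|}|\bb|!^s$ (using the equivalence noted in the excerpt between $\bb!^{\boldsymbol s}$ and $|\bb|!^s$ up to a factor $d^{|\bb|}$). Decompose $\hat{f}=\sum_{n\geq 0}f_n$ into homogeneous parts, where $f_n(\xx)=\sum_{|\bb|=n}a_\bb\xx^\bb$. Since $A\xx$ is linear, each $f_n(A\xx)$ is again homogeneous of degree $n$, so $\hat{g}=\sum_n g_n$ with $g_n(\xx)=\sum_{|\alpha|=n}b_\alpha\xx^\alpha$ and $b_\alpha=\sum_{|\bb|=n}a_\bb\,c_{\bb,\alpha}(A)$, where $c_{\bb,\alpha}(A)$ denotes the coefficient of $\xx^\alpha$ in the polynomial $(A\xx)^\bb$.

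The key estimate is a uniform bound on $c_{\bb,\alpha}(A)$ as $\bb$ and $\alpha$ range over multi-indices of size $n$. Setting $M:=\max_{i,j}|A_{ij}|$, for $|\xx|=1$ one has $|(A\xx)_j|\leq dM$, hence $|(A\xx)^\bb|\leq (dM)^n$ on the unit polydisc. By the multivariate Cauchy inequalities applied to this polynomial on $D_1^d$, every coefficient satisfies $|c_{\bb,\alpha}(A)|\leq (dM)^n$. Combining with the Gevrey bound and counting the at most $\binom{n+d-1}{d-1}\leq (n+1)^d$ multi-indices $\bb$ with $|\bb|=n$ gives
\[
|b_\alpha|\;\leq\;(n+1)^d\,C\,A_0^n\,n!^s\,(dM)^n\;\leq\;C'(A')^n\,n!^s,
\]
for suitably enlarged constants $C',A'>0$ (absorbing the polynomial factor $(n+1)^d$ into a geometric factor). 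Since this holds for all $\alpha$ with $|\alpha|=n$ and all $n\in\N$, the series $\hat{g}=\hat{f}(A\xx)$ lies in $\widehat{\mathcal{O}}_s$.

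There is no real obstacle: the only point that demands a moment of care is the passage from the coefficient-wise Gevrey bound to the homogeneous-component bound and back, which is handled by the equivalence $\bb!\leq |\bb|!\leq d^{|\bb|}\bb!$ already recalled in the paper, and by absorbing the polynomial factor $(n+1)^d$ into the geometric constant. This justifies invoking Lemma \ref{Lemma linear change} freely in the sequel when one needs to put $\Lambda$ in Jordan form or to perform the linear changes of coordinates appearing in the proof of Theorem \ref{Thm Main Result}.
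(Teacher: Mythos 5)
Your proof is correct, but it takes a more explicit route than the paper. The paper does not argue the lemma from scratch: it states, in the remark immediately preceding, that $\widehat{\mathcal{O}}_s$ is closed under composition (appealing to the general stability theory of ultradifferentiable classes, with references to Thilliez and Rainer--Schindl), and also cites Hibino's Lemma~2.1; the lemma is then just the specialization of this closure property to a linear map $\xx\mapsto A\xx$, with the reverse direction obtained from $A^{-1}$. Your argument instead gives a direct, self-contained coefficient estimate: you decompose into homogeneous parts, observe that $f_n(A\xx)$ is again homogeneous of degree $n$, bound the coefficients $c_{\bb,\alpha}(A)$ of $(A\xx)^\bb$ by $(dM)^n$ via the maximum on the unit polydisc and the Cauchy inequalities (a direct multinomial expansion would do just as well), and absorb the polynomial factor $\binom{n+d-1}{d-1}$ into the geometric constant. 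The passage between $\bb!^s$ and $|\bb|!^s$ is legitimate via $\bb!\leq|\bb|!\leq d^{|\bb|}\bb!$, which the paper records. What your approach buys is elementariness and transparency; what the paper's approach buys is generality (stability under arbitrary analytic composition, not just linear maps, which is in fact needed elsewhere since $F$ is composed with $\widehat{\yy}$). One cosmetic remark: the quantifier ``for all $A$'' placed inside the ``if and only if'' makes the $\Leftarrow$ direction trivial (take $A=I$); the useful content is the $\Rightarrow$ direction, which you prove, and your reformulation ``for each fixed $A$, $\hat{f}\in\widehat{\mathcal{O}}_s$ iff $\hat{f}(A\xx)\in\widehat{\mathcal{O}}_s$'' is the strictly stronger statement that follows from your argument plus the $A^{-1}$ trick.
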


Consider a germ $P\in\mathcal{O}\setminus\{0\}$ such that $P(\00)=0$, and $s\geq 0$. There are equivalent definitions for Gevrey series with respect to the germ $P$, see Ref.   \cite[Definition/Proposition 7.5]{MS19}. For simplicity, we will use the characterization given in Ref. \cite[Lemma 4.1]{CMS19}.

\begin{defi} A series $\hat{f}\in\widehat{\mathcal{O}}$ is \textit{$P$-$s$--Gevrey series} if there is a polyradius $\rr$, constants $C,A>0$ and a sequence $\{f_n\}_{n\in\N}\in\mathcal{O}_b(D_\rr)$ such that \begin{equation}\label{Def. P s Gevrey}
		\hat{f}=\sum_{n=0}^\infty f_n P^n,\quad \text{ where }  \sup_{\xx\in D_\rr}|f_n(\xx)|\leq CA^n n!^s.
	\end{equation} We will use the notation $\widehat{\mathcal{O}}^{P,s}$ for the set of $P$-$s$--Gevrey series. 
\end{defi}

This generalizes the notion of $s$--Gevrey series in $x_j$, uniformly in the other variables ($x_j$-$s$--Gevrey series in our notation). In fact, setting $j=1$ to fix ideas, the classical notion requires that when we write $\hat{f}=\sum_{n=0}^\infty f_{n} x_1^n$ as a power series in $x_1$, there is a polyradius $\rr'\in \R^{d-1}$ such that $f_n\in\mathcal{O}_b(D_{\rr'})$ and $\sup_{\xx'\in D_{\rr'}}|f_n(\xx')|\leq CA^n n!^{s},$ for adequate constants $C,A$.

By using the generalized Weierstrass division we can show the notion of $P$-$s$--Gevrey series is well-defined, in the sense that it is independent of the decomposition (\ref{Def. P s Gevrey}). Note it is enough to check the definition for the decomposition (\ref{Eq. Decomposition formal}) induced by a given injective linear form $\ell:\N^d\rightarrow \R^+$. In fact, if (\ref{Def. P s Gevrey}) holds, and since all $f_n$ are defined in a common polydisc, we can use decomposition (\ref{Eq. Decomposition convergent}) to find $\rho>0$ and sequences $\{f_{n,j}\}_{n\in\N}\subset \mathcal{O}_b(D_\rho^d)$ with $J(f_{n,j})\in\Delta_\ell(P)$, such that $f_{n}=\sum_{j=0}^\infty f_{n,j} P^j$, valid for $|\xx|<\rho$, where $f_{n,j=}R_{P,\ell}\circ Q_{P,\ell}^j(f_n)$. Therefore, the decomposition (\ref{Eq. Decomposition formal}) of $\hat{f}$ is given by \[\hat{f}=\sum_{n=0}^\infty g_n P^n,\quad g_n=\sum_{j=0}^n f_{j,n-j}\in \mathcal{O}_b(D_\rho^d),\quad J(g_n)\in \Delta_\ell(P),\] and the sequence $(g_n)_{n\in\N}$ exhibits $s$--Gevrey bounds since \[|g_n(\xx)|\leq \sum_{j=0}^n \|R_{\PP,\ell}\| \|Q_{\PP.\ell}\|^{n-j} \sup_{|\yy|\leq \rho} |f_j(\yy)|\leq \sum_{j=0}^n \|R_{\PP,\ell}\| \|Q_{\PP.\ell}\|^{n-j} CA^j j!^s,\] for $|\xx|<\rho$, as we wanted to show. 

From the previous definition it is easy to deduce many properties on this type of series. We recall the following, valid for $s\geq0$ and $P,Q\in\mathcal{O}\setminus\{0\}$ such that $P(\00)=Q(\00)=0$, c.f., Ref. \cite[Corollary 4.2, Lemma 4.3]{CMS19}: 

\begin{enumerate}[label=(\roman*)]
	\item $\widehat{\mathcal{O}}^{P,s}$ is stable under sums, products and partial derivatives.
	
	\item $\mathcal{O}\subset \widehat{\mathcal{O}}^{P,s}$.
	
	\item For any $k\in\N^+$, $\widehat{\mathcal{O}}^{P^k,ks}=\widehat{\mathcal{O}}^{P,s}$.
	
	\item If $Q$ divides $P$, then $\widehat{\mathcal{O}}^{P,s}\subseteq \widehat{\mathcal{O}}^{Q,s}$. In particular, if $Q=U\cdot P$, $U\in\mathcal{O}^\ast$, then $\widehat{\mathcal{O}}^{P,s}=\widehat{\mathcal{O}}^{Q,s}$. 
	
	\item Let $\phi:(\C^d,\00)\to(\C^d,\00)$ be analytic, $\phi(\00)=\00$, and assume $P\circ \phi$ is not identically zero. If $\hat{f}\in \widehat{\mathcal{O}}^{P,s}$, then $\hat{f}\circ \phi\in \widehat{\mathcal{O}}^{P\circ \phi,s}$.
	
	\item If $P(\xx)=\xx^\aa$, $\aa\in\N^d\setminus\{\00\}$, then $\hat{f}=\sum f_{\boldsymbol{\beta}}\boldsymbol{x}^{\boldsymbol{\beta}}\in \widehat{\mathcal{O}}^{\xx^\aa,s}$ if and only if there are constants $C,A>0$ satisfying \begin{equation}\label{Eq. Gevrey bounds monomials}
		|f_{\boldsymbol{\beta}}|\leq CA^{|\boldsymbol{\beta}|}\min\{ \beta_j!^{s/\a_j} : j=1,\dots,d, \a_j\neq 0\} ,\quad \boldsymbol{\beta}\in\N^d.
	\end{equation}  
\end{enumerate}

\

It follows from (\ref{Eq. Gevrey bounds monomials}) that if $\hat{f}\in \widehat{\mathcal{O}}^{\xx^\aa,s}$, then $\hat{f}\in\widehat{\mathcal{O}}_{s/|\aa|}$. Indeed, this is a consequence of the inequality 
$\min\{a_1,\dots,a_d\}\leq a_1^{\tau_1}\cdots a_d^{\tau_d}$, valid for all $a_j>0$ and $\tau_j\geq 0$ such that $\tau_1+\cdots+\tau_d=1$, by applying it to $\tau_j=\a_j/|\aa|$. This property can be generalized to an arbitrary germ and we have the following new inclusion of rings of Gevrey series.

\begin{prop}\label{Prop. Ps sss Gevrey} Consider $P\in \mathcal{O}$ with $o(P)=k\geq 1$. Then, a $P$-$s$--Gevrey series is a $(s/k,\dots,s/k)$--Gevrey series. In symbols,  \[\widehat{\mathcal{O}}^{P,s}\subseteq \widehat{\mathcal{O}}_{s/k}.\]
\end{prop}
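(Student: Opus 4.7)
The plan is to estimate the Taylor coefficients of $\hat{f}$ directly via Cauchy's integral formula, exploiting the vanishing order of $P^n$ at the origin. Start from a representation $\hat{f}=\sum_{n\geq 0} f_n P^n$ with $f_n\in\mathcal{O}_b(D_\rr)$ and $\sup_{D_\rr}|f_n|\leq CA^n n!^s$. Since $o(P)=k$, writing $P=\sum_{|\gamma|\geq k}p_\gamma \xx^\gamma$ and using Cauchy bounds $|p_\gamma|\leq M/\rr^\gamma$, one can factor $|\xx|^k$ out of the resulting geometrically convergent majorant series to obtain, for some $r'>0$ sufficiently small and some constant $C_1>0$,
\[
|P(\xx)^n|\leq C_1^n|\xx|^{nk},\qquad \xx\in D_{r'}^d,\ n\in\N.
\]

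Write $\hat{f}=\sum_\bb a_\bb \xx^\bb$. Since each $f_n P^n$ vanishes at the origin to order at least $nk$, only indices $n\leq N:=\lfloor |\bb|/k\rfloor$ contribute to $a_\bb$. For such $n$, applying Cauchy's integral formula on the torus $\{|x_j|=r'/2\}$ and combining it with the two bounds above, one sees that the coefficient of $\xx^\bb$ in $f_n P^n$ is bounded by
\[
\frac{CA^n n!^s\cdot C_1^n(r'/2)^{nk}}{(r'/2)^{|\bb|}}.
\]
Summing over $0\leq n\leq N$, bounding the sum by $(N+1)$ times its largest term, and absorbing $(N+1)$, the geometric factor $(AC_1(r'/2)^k)^N$, and the denominator $(r'/2)^{-|\bb|}$ into suitable constants $C',D>0$, one arrives at $|a_\bb|\leq C'D^{|\bb|}N!^s$.

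To conclude, I would invoke the elementary inequality $N!^k\leq (Nk)!$, which follows from the positivity of the multinomial coefficient $\binom{Nk}{N,\dots,N}$, together with $Nk\leq |\bb|$, to deduce $N!^s\leq |\bb|!^{s/k}$. Hence $|a_\bb|\leq C'D^{|\bb|}|\bb|!^{s/k}$ for every $\bb\in\N^d$, which is precisely the characterization of $(s/k,\dots,s/k)$-Gevrey series recalled at the start of the section. The main technical point is the preliminary estimate $|P(\xx)^n|\leq C_1^n|\xx|^{nk}$, which encapsulates the order of vanishing of $P$; once it is in place, everything else reduces to a routine Cauchy-majorant computation combined with the Stirling-type estimate $N!^k\leq (Nk)!$.
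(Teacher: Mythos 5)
Your argument is correct and takes a genuinely different route from the paper. The paper proceeds geometrically: it first applies a linear change of coordinates $\xx\mapsto A\xx$ so that the lowest-order homogeneous part of $P$ is non-vanishing along the $x_1$-axis, then uses the blow-up $x_1=z_1$, $x_j=z_1z_j$ to factor the pulled-back germ as $z_1^k\cdot U(\zz)$ with $U$ a unit; by the stability properties (c) and (d) of $P$-$s$-Gevrey series listed just before the proposition, the transformed series is $z_1^k$-$s$-Gevrey, hence $z_1$-$(s/k)$-Gevrey, and a direct reindexing of its Taylor coefficients then yields the $(s/k,\dots,s/k)$-Gevrey bound. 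Your proof instead works directly with $\hat{f}=\sum f_nP^n$: the pointwise estimate $|P(\xx)|\le C_1|\xx|^k$ near the origin (a consequence of $o(P)=k$ and Cauchy bounds), Cauchy's formula on a polytorus, and the observation that $f_nP^n$ contributes nothing to the coefficient $a_\bb$ once $nk>|\bb|$ give $|a_\bb|\le C'D^{|\bb|}N!^s$ with $N=\lfloor|\bb|/k\rfloor$; the multinomial inequality $N!^k\le (Nk)!\le|\bb|!$ then finishes. Your route is more elementary and self-contained (no blow-up, no change-of-variables machinery, no appeal to the ring properties (a)--(e) of $\widehat{\mathcal{O}}^{P,s}$), at the cost of tracking constants by hand. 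The paper's route, besides fitting the overall geometric theme, produces the intermediate fact that after the blow-up the series becomes $z_1^k$-$s$-Gevrey, which the authors reuse later (e.g. in Example~\ref{Ex. Zhang}); your direct estimate does not yield that by-product. One small bookkeeping point worth making explicit in a write-up: when you bound $\sum_{n\le N}B^nn!^s$ by $(N+1)$ times its largest term, you should treat the cases $B\ge1$ and $B<1$ separately (or simply bound by $(N+1)\max\{1,B\}^NN!^s$) since the summand need not be monotone in $n$; this is harmless because $(N+1)\max\{1,B\}^N$ is still absorbed into $D^{|\bb|}$.
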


\begin{proof} Write $P=\sum_{j=k}^\infty P_j$ as sum of homogeneous polynomials. Since $P_k\neq 0$, we can find $\boldsymbol{a}\neq \00$ such that $P_k(\boldsymbol{a})\neq 0$. Choose $A\in\text{GL}_n(\C)$ having $\boldsymbol{a}$ as first column. If we set $Q(\xx)=P(A\xx)$ and we write it as sum of its homogeneous components $Q=\sum Q_j$, then $Q_j(\xx)=P_j(A\xx)$, and $Q_k(\xx)=P_k(\boldsymbol{a})x_1^k+\cdots$, i.e., $o(Q)=k$ and $Q_k(1,0,\dots,0)\neq 0$.
	
	Consider a $P$-$s$--Gevrey series $\hat{f}$. Then $\hat{f}_0(\xx)=\hat{f}(A\xx)=\sum a_\bb \xx^\bb$ is a $Q$-$s$--Gevrey series, thanks to (v) above. We consider the change of variables \begin{equation}\label{Eq. Blow up}
		x_1=z_1,\quad x_2=z_1z_2,\quad \dots, x_d=z_1z_d,
	\end{equation} that geometrically corresponds to a local expression for the blow--up of the origin in $\C^d$, see, e.g., Ref. \cite{MM80}. If $R(\zz)=Q(\xx)$ and $\hat{f}_1(\zz)=\hat{f}_0(\xx)$, we see $\hat{f}_1$ is a $R$-$s$--Gevrey series, again by (v). On the one hand, \[R(\zz)=Q(z_1,z_1z_2,\dots,z_1z_d)=\sum_{j=k}^\infty z_1^j Q_j(1,z_2,\dots,z_d)=z_1^k U(\zz),\] where $U$ is a unit, since $U(\00)=Q_k(1,0,\dots,0)\neq 0$. Thus, we conclude $\hat{f}_1$ is $z_1^k$-$s$--Gevrey, or equivalently, a $z_1$-$s/k$--Gevrey series. Now, since  \[
	\hat{f}_1(\zz)=\sum_{\bb\in\N^d} a_\bb z_1^{|\bb|} z_2^{\beta_2}\cdots z_d^{\beta_d}\\
	=\sum_{{(n,\boldsymbol{\gamma})\in\N\times\N^{d-1}}\atop {n\geq |\boldsymbol{\gamma}|}} a_{n-|\boldsymbol{\gamma}|,\boldsymbol{\gamma}} z_1^n \zz'^{\boldsymbol{\gamma}},\] we can find constants $C,A>0$ such that $|a_{n-|\boldsymbol{\gamma}|,\boldsymbol{\gamma}}|\leq CA^{n+|\boldsymbol{\gamma}|}n!^{s/k}$. Therefore, in the index $\bb=(n,\boldsymbol{\gamma})$, we find the bound \[|a_\bb|\leq CA^{\beta_1+2\beta_2+\cdots+2\beta_d} |\bb|!^{s/k},\quad \text{ for all } \bb\in\N^d.\] This means $\hat{f}_0$ and $\hat{f}$ are $(s/k,\dots,s/k)$--Gevrey series, due to Lemma \ref{Lemma linear change}.
\end{proof}

\begin{nota} Proposition \ref{Prop. Ps sss Gevrey} and Lemma \ref{Lemma linear change} show that if $\hat{f}\in \widehat{\mathcal{O}}^{\xx^\aa,s}$, then $\hat{f}(A\xx)\in\widehat{\mathcal{O}}_{s/|\aa|}$, for all $A\in \C^{d\times d}$. However, being $\xx^\aa$-$s$--Gevrey is not stable under linear changes of variable. We illustrate this by a simple example: the series $\hat{f}(x_1,x_2)=\sum_{n=0}^\infty n! (x_1x_2)^n$ is $x_1x_2$-$1$--Gevrey, but  \[\hat{f}_0(\xi_1,\xi_2)=\hat{f}(\xi_1+\xi_2,\xi_1-\xi_2)=\sum_{j,k\geq 0} \binom{j+k}{j} (j+k)! (-1)^k \xi_1^{2j} \xi_2^{2k},\] is $(1/2,1/2)$--Gevrey in $\xi_1,\xi_2$, but not $\xi_1\xi_2$-$1$--Gevrey.
\end{nota}

\section{Proof of the main results}\label{Sec. Proof of Theorem }

The idea of both proofs is to find a formal solution of the form  \begin{equation}\label{Eq. Proof y}
	\widehat{\yy}=\sum_{n=0}^\infty y_n P^n,
\end{equation} according to decomposition (\ref{Eq. Decomposition formal}) associated to an injective linear form $\ell:\N^d\to\R^+$, $\ell(\aa)=\ell_1\a_1+\cdots+\ell_d\a_d$ that we fix from now on. Then we find recursively the coefficients $y_n$ such that $J(y_n)\in \Delta_{\nu_\ell(P)}^N$, and use majorant series employing the Nagumo norms to establish the derides bounds. 

It is important to remark that under the conditions of Theorems \ref{Thm Main Result} and \ref{Thm 2}, equation (\ref{Eq. Main Eq}) admits a unique formal power series solution $\widehat{\yy}\in\C[[\xx]]^N$ such that $\widehat{\yy}(\00)=\00$. This can be seen directly by writing $\widehat{\yy}$ as the sum of its homogeneous components in $\xx$ and then finding these terms recursively after plugging $\widehat{\yy}$ in (\ref{Eq. Main Eq}). Hence, if we are able to find a formal solution of (\ref{Eq. Main Eq}) of the form (\ref{Eq. Proof y}), it coincides with the unique formal power series solution of the problem.

\begin{proof}[Proof of Theorem \ref{Thm Main Result}]
	
	We divide it in several steps. Before starting we note that if $F(\xx,\00)\equiv 0$, the unique formal power series solution if the zero series. Thus we assume $c(\xx):=F(\xx,\00)\not\equiv 0$.
	
	\textbf{Step 0} (Preliminaries) Since $F$ is analytic and $\mu=\frac{\d F}{\d \yy}(\00,\00)$, we can write it as a convergent power series in $\yy$, say  \[F(\xx,\yy)=c(\xx)+(\mu+A(\xx))\yy+\sum_{|I|\geq 2}A_I(\xx) \yy^I,\] where $c,A_I\in \mathcal{O}_b(D_{r'} ^d)^N$, $A\in \mathcal{O}_b(D_{r'}^d)^{N\times N}$, $A(\00)=\00$, and the summation is taken over all $I=(i_1,\dots,i_N)\in\N^N$ such that $|I|=i_1+\cdots+i_N\geq 2$. Furthermore, we can find $K,\delta>0$ such that \begin{equation}\label{Eq. Bound F K d}
		\|A_I\|_0\leq K \delta^{|I|},\quad \text{ for all } I\in \N^N.
	\end{equation} Note that all the previous coefficients are defined in the common polydisc of polyradius $(r',\dots,r')$. By reducing $r'$ if necessary, we can also assume that the coefficients of $L=a_1\d_{x_1}+\cdots+a_d \d_{x_d}$ in (\ref{Eq. Main Eq}) -which are not all identically zero- belong to $\mathcal{O}_b(D_{r'}^d)$.

	Given the injective linear form $\ell:\N^d\to\R^+$, take  $\rho'>0$ such that $Q_{P,\ell},R_{P,\ell}:\mathcal{O}_b(D_{\rho'(\ell)})\to\mathcal{O}_b(D_{\rho'(\ell)})$ are defined (Proposition \ref{Generalized Weierstrass Division}). We consider the linear operators $Q,R:\mathcal{O}_b(D_{\rho'(\ell)})^N\to \mathcal{O}_b(D_{\rho'(\ell)})^N$ given by $$Q(f_1,\dots,f_N)=(Q_{P,\ell}(f_1),\dots,Q_{P,\ell}(f_N)), \qquad R(f_1,\dots,f_N)=(R_{P,\ell}(f_1),\dots,R_{P,\ell}(f_N)).$$ Then, by using the norms (\ref{Eq. Nagumo N}) we see that \begin{equation}\label{Eq. Q R proof}
		\|Q(\boldsymbol{f})\|_m\leq \|Q\|\cdot \|\boldsymbol{f}\|_m,\quad \|R(\boldsymbol{f})\|_m\leq \|R\|\cdot \|\boldsymbol{f}\|_m,
	\end{equation} for all $m\in\N$ and $\boldsymbol{f}\in\mathcal{O}_b(D_{\rho'(\ell)})^N$, where to simplify notation we write $\|Q\|=2\cdot 4^{|\nu_\ell(P)|}/\rho'^{\ell(\nu_\ell(P))}$ and $\|R\|=2(1+4^{|\nu_\ell(P)|})$, according to the values found in Proposition \ref{Generalized Weierstrass Division}. The same considerations and inequalities are valid for matrix-valued maps. It will be important for later to note that $\|R\|$ is independent of the radius, since we will shrink $\rho'$ during this proof.

	Now, choose $0<s<\rho'$ with $s^{l_j}\leq r'$, for all $j$, in order to apply Corollary \ref{Coro Decomposition convergent} to the previous functions. Then, we conclude there is $r>0$ such that the maps can be written as \begin{equation}\label{Eq. Expansions}
		a_j=\sum_{n=0}^\infty a_{j,n} P^n,\quad  c=\sum_{n=0}^\infty c_n P^n,\quad A=\sum_{n=0}^\infty A_n P^n,\quad A_I=\sum_{n=0}^\infty A_{I,n}P^n,
	\end{equation} where $A_{n}\in (\mathcal{O}_b(D_r^d)\cap \Delta_{\nu_\ell(P)})^{N\times N}$, $c_n, A_{I,n}\in  (\mathcal{O}_b(D_r^d)\cap \Delta_{\nu_\ell(P)})^N$, $a_{j,n}\in \mathcal{O}_b(D_r^d)\cap \Delta_{\nu_\ell(P)}$, for all $j=1,\dots,d$, $I\in\N^N$ and $n\in\N$.

	\textbf{Step 1} (The coefficient $y_0$) First we determine the term $y_0$ in (\ref{Eq. Proof y}). Note that $\widehat{\yy}(\00)=y_0(\00)=\00$ since $F(\00,\00)=\00$ and $P(\00)=0$. When we plug $\widehat{\yy}$ into equation (\ref{Eq. Main Eq}) and equate in common powers of $P$, we find $y_0$ must be an analytic solution of \begin{equation}\label{Eq. y0}
		0=R\big(c+(\mu+A)y+\sum_{|I|\geq 2} A_{I}y^I\Big)=c_0+\mu y_0+R(A_0y_0)+\sum_{|I|\geq 2} R(A_{I,0}y_0^I),
	\end{equation} satisfying $J(y_0)\in \Delta_{\nu_\ell(P)}^N$. Note that in the last equality we have used (\ref{Eq. RRR}) joint with $R(c)=c_0$, $R(A)=A_0$ and $R(A_I)=A_{I,0}$.
	
	We will prove that (\ref{Eq. y0}) has a unique solution in $\mathcal{O}_b(D_r^d)$ if $r>0$ is taken small enough. In order to proceed, we write (\ref{Eq. y0}) as the fixed point equation \[y=G(y),\quad G(y):=-\mu^{-1}R\Big(c_0+A_0 y+\sum_{|I|\geq 2} A_{I,0}y^I\Big).\] By reducing $r>0$ we show there is $\epsilon>0$ such that $G:\overline{B}_{\epsilon}\to \overline{B}_{\epsilon}$ is well-defined and a contraction, where $\overline{B}_{\epsilon}:=\{y\in\mathcal{O}_b(D_r^d)^N : \|y\|_0\leq \epsilon, y(\00)=\00 \}$ which is closed. Then, by Banach's fixed point theorem, $G$ has a unique fixed point. 
	
	Let us check first that $G$ maps $\overline{B}_{1/2\delta}$ to $\mathcal{O}_b(D_r^d)^N$, where $\delta$ is as in (\ref{Eq. Bound F K d}): by using (\ref{Eq. Q R proof}) for $R$ and (\ref{Eq. Bound F K d}) we see that \begin{align*}
		\|G(y)\|_0&\leq \|\mu^{-1}\|_0 \|R\|\Big(\|c_0\|_0+\|A_0\|_0\|y\|_0+\sum_{|I|\geq 2} \|A_{I,0}\|_0\|y\|_0^{|I|} \Big) \\
		&\leq \|\mu^{-1}\|_0 \|R\|^2\Big(\|c\|_0+\|A\|_0\|y\|_0+\sum_{|I|\geq 2} K\delta^{|I|}\|y\|_0^{|I|} \Big) .
	\end{align*} But the identity $\sum_{|I|\geq 1} \a^{|I|}=(1-\a)^{-N}-1$, $|\a|<1$, shows that \[\|G(y)\|_0\leq \|\mu^{-1}\|_0 \|R\|^2\Big(\|c\|_0+\|A\|_0\|y\|_0+  K\delta\left(2^N-1\right)\|y\|_0 \Big) ,\] thus, $\|G(y)\|_0$ is finite as desired. Now, if $y+h, y\in \overline{B}_{1/2\delta}$ we also have \[\|G(y+h)-G(y)\|_0\leq \|\mu^{-1}\|_0 \|R\|^2\Big(\|A\|_0\|h\|_0+\sum_{|I|\geq 2} K\delta^{|I|}\|(y+h)^I-y^I\|_0\Big).\] Taking into account the inequality \[\|(y+h)^I-y^I\|_0\leq |I| (\|y\|_0+\|h\|_0)^{|I|-1} \|h\|_0,\] that follows readily by induction on $|I|$, we obtain $\|G(y+h)-G(y)\|_0$ is bounded by \begin{align*}
		& \|\mu^{-1}\|_0 \|R\|^2\Big(\|A\|_0+K \sum_{|I|\geq 2} |I|\delta^{|I|} (\|y\|_0+\|h\|_0)^{|I|-1} \Big)\|h\|_0\\
		=& \|\mu^{-1}\|_0 \|R\|^2\Big(\|A\|_0+K\delta g\big(\delta(\|y\|_0+\|h\|_0)\big) \Big)\|h\|_0,
	\end{align*} where $g(\a)=\sum_{|I|\geq 2} |I|\a^{|I|-1}=\frac{d}{d\a} \Big( \sum_{|I|\geq 2} \a^{|I|}\Big)=N((1-\a)^{-N-1}-1)$, for $|\a|<1$. Since $g$ is continuous and $g(0)=0$, we can choose  $0<\epsilon<\min\{1,1/2\delta\}$ such that $\|\mu^{-1}\|_0 \|R\|^2K\delta \cdot g(\epsilon)<1/4$. Also, since $A(\00)=\00$, we can reduce $r$ to assure that $\|\mu^{-1}\|_0 \|R\|^2\|A\|_0<1/4$. Therefore, if $\|y\|_0+\|h\|_0\leq \epsilon$, we conclude \[\|G(y+h)-G(y)\|_0\leq \frac{1}{2}\|h\|_0.\] But $c(\00)=F(\00,\00)=\00$, and since $\epsilon$ has been fixed, we can reduce $r$ again to have $\|\mu^{-1}\|_0 \|R\|^2\|c\|_0<\epsilon/2$. By applying the previous inequality to $y=0$ we find  $\|G(h)\|_0\leq \|G(h)-G(0)\|_0+\|G(0)\|_0\leq \frac{1}{2}\|h\|_0+\epsilon/2$. Thus, $G:\overline{B}_{\epsilon}\to \overline{B}_{\epsilon}$ has the desired properties.
	
	Several remarks are at hand. First, if $y_0$ is the solution of equation (\ref{Eq. y0}), $J(y_0)$ will be the unique formal solution of (\ref{Eq. y0}) and it is convergent. But $R(y_0)$ is another analytic solution of (\ref{Eq. y0}), thus $J(y_0)=J(R(y_0))\in \Delta_{\nu_\ell(P)}^N$. Second, there is a direct way to find a solution of (\ref{Eq. y0}) as follows: we find first a solution $Y_0(\xx)$ of $F(\xx,\yy(\xx))=\00$, with the aid of the holomorphic implicit function theorem -it can be applied since $F(\00,\00)=\00$  and $\frac{\d F}{\d \yy}(\00,\00)=\mu$ is invertible-. Then, it follows by applying $R$ to the previous equation that $y_0=R(Y_0)$ is the solution of (\ref{Eq. y0}), since we already know it is unique.

	\textbf{Step 2} (Recurrence equations for $y_n$) We can now assume $y_0=0$ by making the change of variables $y\mapsto y-y_0$ in the initial equation (\ref{Eq. Main Eq}). In fact, after doing so, we obtain a similar PDE such that $P$ divides $c$ and we search for a formal solution $\widehat{\yy}=\sum_{n=1}^\infty y_n P^n$ which is divisible by $P$.
	
	To find the recurrence equations satisfied by the $y_n$ we start with the right-hand side of (\ref{Eq. Main Eq}). By using the identity (\ref{Eq. Product}) we find 
	\[A\cdot \widehat{\yy}=\sum_{n=1}^\infty \left(\sum_{k=1}^n \sum_{j=1}^k RQ^{n-k}(A_{k-j}y_j)\right)P^n.\] For the non-linear term, analogously to (\ref{Eq. ProductS}), we have the decomposition \begin{align*}
		\sum_{|I|\geq 2} A_I \widehat{\yy}^I&=\sum_{k=2}^\infty\, \sum_{\ast_k} A_{I,m} \prod_{{1\leq l\leq N}\atop {1\leq j\leq i_l}} y_{l,n_{l,j}}\,P^k\\
		&=\sum_{k=2}^\infty\sum_{p=0}^\infty \, \sum_{\ast_k} RQ^p\bigg(A_{I,m} \prod_{{1\leq l\leq N}\atop {1\leq j\leq i_l}} y_{l,n_{l,j}}\bigg)\,P^{k+p}\\
		&=\sum_{n=2}^\infty\!\left[\sum_{k=2}^n \, \sum_{\ast_k} RQ^{n-k}\bigg(\!A_{I,m} \prod_{{1\leq l\leq N}\atop {1\leq j\leq i_l}} y_{l,n_{l,j}}\bigg)\!\right]P^{n}.
	\end{align*} where the sum $\sum_{\ast_k}$ is taken over all $I\in\N^N$  such that $2\leq |I|\leq k$, $m$ satisfying $0\leq m\leq k-|I|$, and $n_{l,j}\geq 1$ such that  $k=m+n_{1,1}+\cdots+n_{1,i_1}+\cdots+n_{N,1}+\cdots+n_{N,i_N}$. Note in particular that $n_{l,j}<k\leq n$ and thus no component of $y_n=(y_{n,1},\dots,y_{n,d})$ appears in the coefficient corresponding to $P^n$.
	
	For the left-hand side of (\ref{Eq. Main Eq}), using the hypothesis $L(P)=P\cdot h$, for some $h\in\mathcal{O}_b(D_r^d)$, we can write \begin{equation}\label{Eq. P L y} P\cdot L(\widehat{\yy})=\sum_{n=1}^\infty \left(L(y_{n-1}) +ny_nL(P)\right)P^n=\sum_{n=2}^\infty \left(L(y_{n-1}) +(n-1)h y_{n-1}\right)P^n.\end{equation} 
	
	Now, equating both sides of (\ref{Eq. Main Eq}) in common power series of $P$ we obtain the recurrence \begin{align*}
		\sum_{k=2}^n &RQ^{n-k}(L(y_{k-1})) +\sum_{k=2}^n(k-1)RQ^{n-k}(h y_{k-1})=c_n\\
		&+\mu y_n+\sum_{k=1}^n \sum_{j=1}^k RQ^{n-k}(A_{k-j}y_j)+\sum_{k=2}^n \, \sum_{\ast_k} RQ^{n-k}\bigg(\!A_{I,m} \prod_{{1\leq l\leq N}\atop {1\leq j\leq i_l}} y_{l,n_{l,j}}\!\bigg).
	\end{align*} Equivalently, we have \begin{equation}\label{Eq. mu yn}
		\mu y_n+R(A_0y_n)=b_n:=e_n+\sum_{k=2}^n(k-1)RQ^{n-k}(h y_{k-1}),
	\end{equation} for all $n\geq1$, where \begin{align*}
		e_n=&-c_n+\sum_{k=2}^n RQ^{n-k}(L(y_{k-1}))-\sum_{k=1}^{n-1} \sum_{j=1}^k RQ^{n-k}(A_{k-j}y_j)\\ &-\sum_{j=1}^{n-1} R(A_{n-j}y_j)
		-\sum_{k=2}^n \, \sum_{\ast_k} RQ^{n-k}\bigg(A_{I,m} \prod_{{1\leq l\leq N}\atop {1\leq j\leq i_l}} y_{l,n_{l,j}}\bigg).\end{align*}  Note in particular that $b_1=-c_1$.

	Equation (\ref{Eq. mu yn}) can be solved as follows: consider $Y_n=(\mu+A_0)^{-1}(b_n)$, where we have if necessary, reduced $r$ to ensure $\mu+A_0(\xx)$ is invertible for all $|\xx|\leq r$. Then $R(Y_n)$ solves (\ref{Eq. mu yn}), as we see by applying $R$ to $(\mu+A_0)Y_n=b_n$ and recalling that $R(b_n)=b_n$. To check uniqueness, note that if $y_{n}$ and $w_n$ are solutions, then $R((\mu+A_0)(y_n-w_n))=0$, so $(\mu+A_0)(y_n-w_n)=h_1 P$, for some $h_1\in\mathcal{O}$. Thus, $y_n-w_n=R(y_n-w_n)=R((\mu+A_0)^{-1}h_1P)=0$. In conclusion, we can find recursively the coefficients $y_n$ by means of the formulas \begin{equation}\label{Eq. yn=R bn}
		y_n=R\left((\mu+A_0)^{-1}(b_n)\right),
	\end{equation} and equation (\ref{Eq. Main Eq}) has a unique formal power series solution.
	
	\textbf{Step 3} (Majorant series) We use the majorant series technique to show that $\widehat{\yy}$ is $P$-$1$--Gevrey by proving that $\sum_{n=1}^\infty \|y_n\|_n \tau^n$ is $1$--Gevrey in $\tau$.
	
	We have chosen $r>0$ satisfying all previous requirements in order to find $y_n$ recursively. Now, we take $0<\rho <\min\{r,1\}$ satisfying $\rho^{l_j}<r$, $j=1,\dots,d$, in order to apply the bounds (\ref{Eq. Q R proof}) for functions in $\mathcal{O}_b(D_{\rho(\ell)})^N$.
	
	Let $M=\|(\mu+A_0)^{-1}\|_0>0$. By applying the Nagumo norm $\|\cdot\|_n$ to equation (\ref{Eq. yn=R bn}) and taking into account the properties developed in Propositions \ref{Prop Nagumo norms} and \ref{Generalized Weierstrass Division} we find that \begin{align*}
		\frac{\|y_n\|_n}{M\|R\|}\leq& \|c_n\|_n+\sum_{k=2}^n \|R\| \|Q\|^{n-k}(\|L(y_{k-1})\|_n+(k-1)\|h y_{k-1}\|_n)\\
		&+\sum_{k=1}^{n-1} \|R\| \|Q\|^{n-k}\sum_{j=1}^k \|A_{k-j}\|_{k-j} \|y_j\|_j+\sum_{j=1}^{n-1} \|R\| \|A_{n-j}\|_{n-j} \|y_j\|_j\\
		&+\sum_{k=2}^n \|R\| \|Q\|^{n-k}\, \sum_{\ast_k} \|A_{I,m}\|_m \prod_{{1\leq l\leq N}\atop {1\leq j\leq i_l}} \|y_{l,n_{l,j}}\|_{n_{l,j}}.
	\end{align*} To bound  the term $\|L(y_{k-1})\|_n$ use Proposition \ref{Prop Nagumo norms} (ii) and that $\rho<1$ to get \begin{align*}
		\left\|a_j\frac{\d y_{k-1}}{\d x_j}\right\|_n\leq \|a_j\|_{n-k} \left\|\frac{\d y_{k-1}}{\d x_j}\right\|_k&\leq ek\prod_{i\neq j}(\rho^{\ell_i}/2)\|a_j\|_{n-k} \|y_{k-1}\|_{k-1}\\
		&\leq ek\|a_j\|_{n-k} \|y_{k-1}\|_{k-1}.
	\end{align*} But inequality (\ref{Eq fm f0}) implies that $\|a_j\|_{n-k}\leq \|a_j\|_0.$ If $a=\|a_1\|_0+\cdots+\|a_d\|_0$, by hypothesis $a>0$, and \[\|L(y_{k-1})\|_n\leq eak\|y_{k-1}\|_{k-1}.\] On the other hand, $\|h y_{k-1}\|_n\leq \|h\|_{n-k+1} \|y_{k-1}\|_{k-1}\leq \|h\|_0\|y_{k-1}\|_{k-1}$. Thus, we find that \begin{align}
		\label{Eq. yn MR} \frac{\|y_n\|_n}{M\|R\|}\leq& \|c_n\|_n+\|R\|(ea+\|h\|_0)\sum_{k=2}^n  k\|Q\|^{n-k}\|y_{k-1}\|_{k-1}\\
		\nonumber +&\|R\|\sum_{k=1}^{n-1} \|Q\|^{n-k} \sum_{j=1}^k  \|A_{k-j}\|_{k-j} \|y_j\|_j+\|R\|\sum_{j=1}^{n-1} \|A_{n-j}\|_{n-j} \|y_j\|_j\\
		\nonumber +&\|R\|\sum_{k=2}^n  \|Q\|^{n-k}\, \sum_{\ast_k} \|A_{I,m}\|_m \prod_{{1\leq l\leq N}\atop {1\leq j\leq i_l}} \|y_{n_{l,j}}\|_{n_{l,j}}.
	\end{align} If we divide by $n!$ and using that $m!k!\leq (m+k)!$ we conclude that \begin{align*}
		&\frac{\|y_n\|_n}{M\|R\| n!}\leq \frac{\|c_n\|_n}{n!}+\|R\|(ea+\|h\|_0)\sum_{k=2}^n  \frac{\|Q\|^{n-k}}{(n-k)!}\, \frac{\|y_{k-1}\|_{k-1}}{(k-1)!}\\
		&+\|R\|\sum_{k=1}^{n-1} \frac{\|Q\|^{n-k}}{(n-k)!} \sum_{j=1}^k  \frac{\|A_{k-j}\|_{k-j}}{(k-j)!} \frac{\|y_j\|_j}{j!}+\|R\|\sum_{j=1}^{n-1} \frac{\|A_{n-j}\|_{n-j}}{(n-j)!} \frac{\|y_j\|_j}{j!}\\
		&+\|R\|\sum_{k=2}^n  \frac{\|Q\|^{n-k}}{(n-k)!}\, \sum_{\ast_k} \frac{\|A_{I,m}\|_m}{m!} \prod_{{1\leq l\leq N}\atop {1\leq j\leq i_l}} \frac{\|y_{n_{l,j}}\|_{n_{l,j}}}{n_{l,j}!}.
	\end{align*}
	
	\noindent Let us define the sequence $z_n$ recursively by \begin{align}
		\label{Eq. zn recurrence} \frac{z_n}{M\|R\|}=&\frac{\|c_n\|_n}{n!}+\|R\|(ea+\|h\|_0)\sum_{k=2}^n  \frac{\|Q\|^{n-k}}{(n-k)!}\, z_{k-1}\\\nonumber &+\|R\|\sum_{k=1}^{n-1} \frac{\|Q\|^{n-k}}{(n-k)!} \sum_{j=1}^k  \frac{\|A_{k-j}\|_{k-j}}{(k-j)!} z_j
		+\|R\|\sum_{j=1}^{n-1} \frac{\|A_{n-j}\|_{n-j}}{(n-j)!} z_j\\
		\nonumber &+\|R\|\sum_{k=2}^n  \frac{\|Q\|^{n-k}}{(n-k)!}\, \sum_{\ast_k} \frac{\|A_{I,m}\|_m}{m!} \prod_{{1\leq l\leq N}\atop {1\leq j\leq i_l}} z_{n_{l,j}},
	\end{align} where $z_1=M\|R\|\|c_1\|_1$. Since the terms of the previous equation are all nonnegative real numbers, we find inductively that \begin{equation}\label{Eq. yn zn}\frac{\|y_n\|_n}{n!}\leq z_n.\end{equation} On the other hand, we consider the generating power series \begin{equation}\label{Eq. Generating series}
		\overline{c}=\sum_{n=1}^\infty \frac{\|c_n\|_n}{n!}\tau^n 
		,\quad  \overline{A}=\sum_{n=0}^\infty \frac{\|A_n\|_n}{n!}\tau^n,\quad \overline{F}=\sum_{m\geq 0, |I|\geq 2} \frac{\|A_{I,m}\|_m}{m!}\tau^m Z^{|I|},\end{equation}  which are convergent. In fact, recalling the expansions in (\ref{Eq. Expansions}), Corollary \ref{Coro Decomposition convergent} and the inequalities in (\ref{Eq. Q R proof}) and (\ref{Eq fm f0}) --recall $\rho<1$--, we have \begin{equation}\label{Eq. Convergence c}\|c_n\|_n=\|R\circ Q^n(c)\|_n\leq \|R\|\|Q\|^n \|c\|_n\leq \|R\|\|Q\|^n \|c\|_0.\end{equation} thus $\overline{c}$ is actually entire. The same argument applies for $\overline{A}$. Now, for $\overline{F}$ we use inequality (\ref{Eq. Bound F K d})
	to write \[\|A_{I,m}\|_m=\|R\circ Q^m(A_I)\|_m\leq \|R\|\|Q\|^m \|A_I\|_0\leq \|R\|\|Q\|^m K\delta^{|I|},\] and thus \begin{align}
		\label{Eq. Convergence F} \sum_{|I|=j} \|A_{I,m}\|_m &\leq K\|R\| \|Q\|^m  \sum_{|I|=j}  \delta^{|I|} \\ \nonumber
		&=K\|R\| \|Q\|^m  \binom{j+N-1}{N-1} \delta^{j}\leq (K\|R\|2^{N-1}) \|Q\|^m (2\delta)^{j},
	\end{align} since the number of solutions $I\in\N^N$ of $|I|=j$ is $\binom{j+N-1}{N-1}$ which is less than $2^{j+N-1}$. Therefore, the coefficient in $\tau^m Z^j$ of $\overline{F}$ is bounded by $(K\|R\|2^{N-1})  (2\delta)^{j} \|Q\|^m/m!$ proving the convergence of $\overline{F}$.	
	
	Using these series and equation (\ref{Eq. zn recurrence}), we find $Z(\tau)=\sum_{n=1}^\infty z_n \tau^n$ is a formal solution of the analytic equation \[E(\tau,Z(\tau))=0,\] where
	\begin{align*}
		E(\tau,Z):=&-\frac{Z}{M\|R\|}+\overline{c}(\tau)+\|R\|(ea+\|h\|_0)  e^{\|Q\|\tau} \tau Z\\&+\|R\| (e^{\|Q\|\tau}\overline{A}(\tau)-\|A_0\|_0)Z+\|R\|e^{\|Q\|\tau}\overline{F}(\tau,Z).
	\end{align*} But (\ref{Eq. Generating series}) shows that $\overline{c}(0)=\overline{F}(0,0)=\frac{\d \overline{F}}{\d Z}(0,0)=0$ and $\overline{A}(0)=\|A_0\|_0$. Therefore, 
	$E(0,0)=\overline{c}(0)+\|R\|\overline{F}(0,0)=0$ and $\frac{\d E}{\d Z}(0,0)=-{1}/{M\|R\|}+\|R\|(\overline{A}(0)-\|A_0\|_0)+\|R\|\frac{\d \overline{F}}{\d Z}(0,0)=-{1}/{M\|R\|}\neq 0$ 	
	Then the holomorphic implicit function theorem implies this equation has a unique convergent power series solution at the origin, thus it must be $Z(\tau)$, so it is convergent. By (\ref{Eq. yn zn}), $\sum_{n=1}^\infty \|y_n\|_n \tau^n$ is $1$--Gevrey in $\tau$ as desired.
\end{proof}

\begin{proof}[Proof of Theorem \ref{Thm 2}] Regarding the previous proof, only some minor changes are required so establish the result. While Step 0 and Step 1 remain the same, in Step 2 the recurrence for $y_n$ takes the form \begin{equation}\label{Eq. mu yn 2}
		\mu y_n-nR(L(P)y_n)+R(A_0y_n)=d_n:=e_n+\sum_{k=1}^{n-1} kRQ^{n-k}(L(P)y_{k})
		,\end{equation} for all $n\geq1$, where $e_n$ is as before. Then $d_n$ and $b_n$ differ only in the previous sum, that we bound by shifting one index, as follows \begin{equation}\label{Eq. last one}
		\left\|\sum_{k=2}^{n} (k-1)RQ^{n-k+1}(L(P)y_{k-1})\right\|_n\leq C \sum_{k=2}^{n}  k \|Q\|^{n-k}  \|y_{k-1}\|_{k-1},
	\end{equation} where $C=\|R\|\|Q\|\|L(P)\|_0$. In the current case, the solution of (\ref{Eq. mu yn 2}) is  \[y_n=R\left((\mu-nL(P)I_N+A_0)^{-1}(d_n)\right),\] where $I_N$ is the identity matrix of size $N$. To make this formula meaningful, it is enough to prove $\mu-nL(P)(\xx)I_N+A_0(\xx)$ is invertible for all $n\geq 1$ and $|\xx|\leq r$, if $r$ is sufficiently small. To proceed let us recall that if $B\in\C^{N\times N}$ is such that $|B|<1$ for a matrix norm $|\cdot |$, then $I_N-B$ is invertible, $(I_N-B)^{-1}=\sum_{n=0}^\infty B^n$, and $|(I_N-B)^{-1}|\leq (1-|B|)^{-1}$. Here as before we use $|B|=\max_{1\leq i\leq N} \sum_{j=1}^N |B_{i,j}|$. Now, since $L(P)(\00)\neq 0$, we can choose a small $r>0$ such that $\a=\inf_{|x|\leq r}|L(P)(\xx)|>0$. Thus, if $n>\|\mu+A_0\|_0/\a$, we see that $\left|\mu+A_0(\xx)\right|/|nL(P)(\xx)|\leq \|\mu+A_0\|_0/n\a<1$, for all $|\xx|\leq r$, so $\mu-nL(P)+A_0$ is invertible and \begin{align*}
		|(\mu-nL(P)(\xx)+A_0(\xx))^{-1}|		&=\frac{1}{n|L(P)(\xx)|}\left|\left(I_N-\frac{1}{nL(P)(\xx)}(\mu+A_0(\xx))\right)^{-1}\right|\\
		&\leq \frac{1/\a n}{1-\frac{\|\mu+A_0\|_0}{\a n}}=\frac{1}{\a n-\|\mu+A_0\|_0}.
	\end{align*} For $n\leq \|\mu+A_0\|_0/\a$, by hypothesis the remaining finite number of matrices $\mu-nL(P)(\xx)+A_0(\xx)$ are invertible at the origin. Thus, we can shrink $r$ and assume they are invertible for all $|\xx|\leq r$. In conclusion, all these matrices are invertible, and we can find $M>0$ such that \begin{equation}\label{Eq. mu Mn}
		\|(\mu-nL(P)+A_0)^{-1}\|_0\leq M/n,\quad \text{ for all } n\geq 1.
	\end{equation}
	
	At this stage we can proceed with Step 3 by using the Nagumo norms and taking into account (\ref{Eq. last one}). However, the factor $M/n$ in (\ref{Eq. mu Mn}) improves our bounds and shows that 
	\begin{align*}
		\frac{\|y_n\|_n}{M\|R\|}\leq& \|c_n\|_n+\|R\|(ea+\|Q\|\|L(P)\|_0)\sum_{k=2}^n  \|Q\|^{n-k}\,\|y_{k-1}\|_{k-1}+\cdots,
	\end{align*} where the dots indicate the remaining terms are the same as in (\ref{Eq. yn MR}). In this case it is not necessary to divide by $n!$, just by defining $z_n$ accordingly we find $\|y_n\|_n\leq z_n$, for all $n\geq 1$, and $Z(\tau)$ satisfies the analytic equation \[\widetilde{E}(\tau,Z(\tau))=0,\] where 
	\begin{align*}
		\widetilde{E}(z,Z):= &-\frac{Z}{M\|R\|}+ \widetilde{c}(\tau)+\|R\|(ea+\|Q\|\|L(P)\|_0)  \frac{\tau Z}{1-\|Q\|\tau}\\ &+\|R\| \left(\frac{\widetilde{A}(\tau)}{1-\|Q\|\tau}-\|A_0\|_0\right)Z+\|R\|\frac{\widetilde{F}(\tau,Z)}{1-\|Q\|\tau},
	\end{align*} with coefficients  $$\widetilde{c}(\tau)=\sum_{n=1}^\infty \|c_n\|_n\tau^n,\,\, 
	\widetilde{A}(\tau)=\sum_{n=0}^\infty \|A_n\|_n \tau^n,\,\, \widetilde{F}(\tau,Y)=\sum_{m\geq 0, |I|\geq 2} \|A_{I,m}\|_m \tau^mY^{|I|},$$ which are again convergent as justified by the inequalities (\ref{Eq. Convergence c}) and (\ref{Eq. Convergence F}). Since $\widetilde{E}(0,0)=\widetilde{c}(0)+\|R\| \widetilde{F}(0,0)=0$ and $\frac{\d \widetilde{E}}{\d Z}(0,0)=-1/M\|R\|+\|R\|(\widetilde{A}(0)-\|A_0\|_0)+\|R\|\frac{\d \widetilde{F}}{\d Z}(0,0)=-1/M\|R\|\neq 0$, the holomorphic implicit function theorem proves that $Z(\tau)$ and therefore $\sum_{n=1}^\infty \|y_n\|_n \tau^n$ are convergent. This proves the convergence of $\widehat{\yy}=\sum_{n=1}^\infty y_n P^n$ as required.
\end{proof}

\section{A simple extension to higher order systems}\label{Sec. Higher order}

There is a straightforward way to extend our theorems for systems of PDEs of higher order by augmenting the size of the given equation.

\begin{coro}\label{Coro. 1}
	Let $P$, $L$ and $F$ be as in Theorem \ref{Thm Main Result}, fix $u_1,\dots,u_{k-1}\in\mathcal{O}$, and consider the  system of PDEs \begin{equation}\label{Eq. main 3}
		(P\cdot L)^k(\yy)(\xx)+u_{k-1}(\xx) (P\cdot L)^{k-1}(\yy)(\xx)+\cdots+u_1(\xx) (P\cdot L)(\yy)(\xx)=F(\xx,\yy).\end{equation} Then, the following statements hold:

\begin{enumerate}[label=(\roman*)]
		
\item If $P$ divides $L(P)$, (\ref{Eq. main 3}) has a unique formal power series solution which is $P$-$1$--Gevrey.
		
\item If $L(P)(\00)\neq 0$, and $\sigma-nL(P)(\00)\neq 0$, for all $n\in\N$ and all solutions $\sigma$ of the polynomial equation \begin{equation}\label{Eq. Eigenvalues}
p_\mu\left(\sigma^{k}+u_{k-1}(\00)\sigma^{k-1}+\cdots+u_2(\00)\sigma^2+u_1(\00)\sigma\right)=0,\end{equation} where $p_\mu$ is the characteristic polynomial of $\mu$, then (\ref{Eq. main 3}) has a unique convergent power series solution.
\end{enumerate}
\end{coro}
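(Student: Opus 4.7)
The plan is to reduce the higher order equation (\ref{Eq. main 3}) to a first order system of the shape (\ref{Eq. Main Eq}) of size $kN$ and then invoke Theorem \ref{Thm Main Result} or Theorem \ref{Thm 2}. I would introduce auxiliary unknowns $\yy_j:=(P\cdot L)^j(\yy)$ for $j=0,1,\dots,k-1$, stack them as $\YY=(\yy_0,\dots,\yy_{k-1})\in\C^{kN}$, and rewrite the defining relations $(P\cdot L)(\yy_j)=\yy_{j+1}$ for $j<k-1$ together with (\ref{Eq. main 3}) in the form $(P\cdot L)(\yy_{k-1})=F(\xx,\yy_0)-u_{k-1}(\xx)\yy_{k-1}-\cdots-u_1(\xx)\yy_1$. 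These $k$ equations assemble into a single first order system
$$P(\xx)L(\YY)(\xx)=G(\xx,\YY),$$
where $G$ is holomorphic near $(\00,\00)$ and $G(\00,\00)=\00$. A formal series $\widehat{\yy}$ solves (\ref{Eq. main 3}) if and only if $\widehat{\YY}:=(\widehat{\yy},(PL)(\widehat{\yy}),\dots,(PL)^{k-1}(\widehat{\yy}))$ solves the augmented system, so existence, uniqueness, the Gevrey type, and convergence of $\widehat{\yy}$ transfer from the corresponding properties of $\widehat{\YY}$.

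The crucial step is identifying the Jacobian $\mu_G:=\tfrac{\partial G}{\partial \YY}(\00,\00)$, which from the block structure of $G$ is the block companion matrix
$$\mu_G=\begin{pmatrix} 0 & I_N & 0 & \cdots & 0 \\ 0 & 0 & I_N & \cdots & 0 \\ \vdots & \vdots & & \ddots & \vdots \\ 0 & 0 & 0 & \cdots & I_N \\ \mu & -u_1(\00)I_N & -u_2(\00)I_N & \cdots & -u_{k-1}(\00)I_N \end{pmatrix}.$$
A direct row reduction on such companion-type blocks (equivalently, a short induction on $k$) yields
$$\det(\sigma I_{kN}-\mu_G)=\det\!\left(\left(\sigma^k+u_{k-1}(\00)\sigma^{k-1}+\cdots+u_1(\00)\sigma\right)I_N-\mu\right)=p_\mu\!\left(\sigma^k+u_{k-1}(\00)\sigma^{k-1}+\cdots+u_1(\00)\sigma\right),$$
the precise polynomial in (\ref{Eq. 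Eigenvalues}).

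For part (1), the value $\sigma=0$ is a root only if $p_\mu(0)=\det(-\mu)=0$, which is excluded since $\mu$ is invertible; hence $\mu_G$ is invertible. The divisibility $P\mid L(P)$ is a property of $P$ and $L$, unchanged by the reduction, so Theorem \ref{Thm Main Result} applies to $P(\xx)L(\YY)=G(\xx,\YY)$ and delivers a unique $P$-$1$--Gevrey formal solution $\widehat{\YY}$; projecting onto the first block gives the desired $\widehat{\yy}$. For part (2), the hypothesis that $\sigma-nL(P)(\00)\neq 0$ for every $n\in\N$ and every root $\sigma$ of (\ref{Eq. Eigenvalues}) translates, via the characteristic polynomial computed above, to $nL(P)(\00)\notin\text{Spec}(\mu_G)$ for all $n\in\N$; equivalently, $\mu_G-nL(P)(\00)I_{kN}$ is invertible for every $n\in\N$. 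Theorem \ref{Thm 2} then yields a convergent $\widehat{\YY}$ and hence a convergent $\widehat{\yy}$.

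The only mildly nontrivial step is the characteristic polynomial computation for $\mu_G$, which is a classical fact about block companion matrices; everything else is a bookkeeping transcription of the first order results through the augmentation $\YY=(\yy_0,\dots,\yy_{k-1})$.
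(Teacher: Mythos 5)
Your proof is correct and follows exactly the same route as the paper: augment to a first order system in $\YY=(\yy_0,\dots,\yy_{k-1})$ of size $kN$, identify the Jacobian at the origin as the block companion matrix, observe its characteristic polynomial is $p_\mu\bigl(\sigma^k+u_{k-1}(\00)\sigma^{k-1}+\cdots+u_1(\00)\sigma\bigr)$, and invoke Theorems \ref{Thm Main Result} and \ref{Thm 2}. The paper states the eigenvalue identification without proof, whereas you supply the short companion-matrix computation, but this is simply filling in a detail rather than a different argument.
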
 

\begin{proof} In the variable $\boldsymbol{w}=(\boldsymbol{w}_0,\boldsymbol{w}_1,\dots,\boldsymbol{w}_{k-1})\in\C^{Nk}$, where $\boldsymbol{w}_0=\yy$, $\boldsymbol{w}_1=(P\cdot L)(\boldsymbol{w}_0), \boldsymbol{w}_2= (P\cdot L)(\boldsymbol{w}_1),\dots, \boldsymbol{w}_{k-1}=(P\cdot L)(\boldsymbol{w}_{k-2})$, (\ref{Eq. main 3}) can be written as \begin{align*}
		P\cdot L(\boldsymbol{w})&=G(\xx,\boldsymbol{w})\\
		&=\left(\boldsymbol{w}_1,\boldsymbol{w}_2\dots,\boldsymbol{w}_{k-1},F(\xx,\boldsymbol{w}_0)-u_1(\xx)\boldsymbol{w}_1-\cdots-u_{k-1}(\xx) \boldsymbol{w}_{k-1}\right),
	\end{align*} which has the form of equation (\ref{Eq. Main Eq}). Then, the results follow from Theorem \ref{Thm Main Result} and \ref{Thm 2} by noticing that
	\[\frac{\d G}{\d \boldsymbol{w}}(\00,\00)=\left(\begin{array}{ccccc}
		0 & I_N & 0 & \cdots & 0\\
		0 & 0 & I_N & \cdots & 0\\
		\vdots & \vdots & \vdots & \ddots & \vdots\\
		0 & 0 & 0 & \cdots & I_N\\
		\mu & -u_1(\00)I_N & -u_2(\00)I_N & \cdots & -u_{k-1}(\00)I_N\\
	\end{array}\right)\in\C^{Nk}\times\C^{Nk}\] is invertible with eigenvalues given by the solutions of (\ref{Eq. Eigenvalues}), see, e.g., Refs. \cite[p. 293]{Barnett1981}, \cite{CDet}.
\end{proof}

\section{Examples}\label{Sec. Examples}

Theorem \ref{Thm Main Result} has a general nature and recovers many examples of Gevrey formal power series solutions of ODEs and PDEs that have been treated in the literature. We conclude this paper explaining some of them.

\begin{eje} The solution of (\ref{Eq. Main Eq}) is generically divergent, but there are cases where it can be convergent. This is evidenced already in the case of one variable: while Euler's equation $x^2y'+y=x$ has the $x$-$1$--Gevrey solution $\widehat{y}(x)=\sum_{n=0}^\infty (-1)^n n!x^{n+1}$, the equation $x^2y'+y=x+x^2$ has $\widehat{y}(x)=x$ as analytic solution. More examples can be obtained by taking $f\in\C\{z\}$ and $P\in\mathcal{O}$, $P(\00)=0$. If $L(P)=0$, then the solution of \[P(\xx)L(y)=y-f(P(\xx)),\quad \text{ is }\quad  \widehat{y}(\xx)=f(P(\xx)),\] which is convergent.
\end{eje}

\begin{eje}\label{Ex. ODE} We consider the equation \[x_1x_2\frac{\d y}{\d x_1}=\mu y-\frac{x_1}{1-x_1},\] where $\mu\neq 0$ is constant. A way to find its unique formal power series solution is to plug $\widehat{y}=\sum_{n=0}^\infty y_n(x_2)x_1^n$ into the equation and then equate common powers of $x_1$. Thus, we find $y_0(x_2)=0$, $y_n(x_2)=(\mu-n x_2)^{-1}$, $n\geq 1$, and the formal solution is equal to \[\widehat{y}(x_1,x_2)=\sum_{n\geq1, m\geq 0} \frac{n^m}{\mu^{m+1}} x_1^n x_2^m.\] We see $\widehat{y}$ is $x_2$-$1$--Gevrey by direct inspection or by applying Theorem \ref{Thm Main Result} to $P=x_2$ and $L=x_1\frac{\d}{\d x_1}$ since $L(P)=0$. However, $\widehat{y}$ is not $x_1$-$1$--Gevrey, i.e., $P=x_1$, $L=x_2\frac{\d}{\d x_1}$ is not a valid choice: as a power series in $x_1$, $y_n(x_2)$ is analytic on the disc $\{x_2\in\C : |x_2|<|\mu|/n\}$, so there is no common neighborhood of the origin where all the $y_n(x_2)$ are defined.
\end{eje}

\begin{eje} Equation (\ref{Eq. Main Eq}) includes the case of singularly perturbed and doubly singular ODEs \begin{equation}\label{Eq Example 1}
		Q(\ee)x^{k+1}\frac{\d\yy}{\d x}(x,\ee)=F(x,\ee,\yy),
	\end{equation} where $x\in(\C,0)$, $\ee=(\varepsilon_1,\dots,\varepsilon_m)\in(\C^m,\00)$, $Q$ is analytic at the origin and  $k\geq -1$ is an integer. In the regular case $k=-1$ or $0$ and $Q(\00)\neq 0$, if there exists a formal solution, it is convergent. In the irregular case, if $Q(\00)\neq 0$, we can interpret $\ee$ as regular parameters and the classical theory establishes that the formal solution of (\ref{Eq Example 1}) is $1/k$--Gevrey in $x$, uniformly in $\ee$, see Ref.  \cite{Sibuya1}, i.e., it is a $x^k$-$1$--Gevrey series. Equation (\ref{Eq Example 1}) was studied by W. Balser and V. Kostov in Ref. \cite{BalserKostov02} for $m=1$, $k=0$, and by W. Balser and J. Mozo-Fern\'{a}ndez in Ref. \cite{BalserMozo02} for $m=1$, $k\geq 1$, both when $Q(\varepsilon)=\varepsilon$ and in the linear case $F(x,\varepsilon,y)=A(x,\varepsilon)y-f(x,\varepsilon)$, proving the summability of the formal solution in the perturbation parameter $\varepsilon$, in adequate domains of $x$. On the other hand, M. Canalis-Durand, J.P. Ramis, R. Sch\"{a}fke and Y. Sibuya in Ref. \cite{CDRSS} studied this equation  when $m=1$, $k=-1$ and $Q(\varepsilon)=\varepsilon^\sigma$, $\sigma\geq 1$ a positive integer. In particular, they showed that the solution is $1/\sigma$--Gevrey in $\varepsilon$, uniformly in $x$. Later on, M. Canalis-Durand, J. Mozo-Fern\'{a}ndez and R. Sch\"{a}fke in Ref. \cite{CDMS} considered the case $m=1$, $Q(\varepsilon)=\varepsilon^q$, and $k,q\geq 1$, and they proved the $\varepsilon^q x^k$-$1$--summability of the formal power series solution and the singular directions are determined by the solutions of $\det\left(k\eta^q\xi^kI_N-\mu\right)=0,$ in the two-dimensional $(\xi,\eta)-$Borel space. We can recover all these divergence rates using Theorem \ref{Thm Main Result}:
	
	\begin{enumerate}[label=(\roman*)]	
		\item If $k=-1$ and $Q(\00)=0$, by choosing $P(x,\ee)=Q(\ee)$ and $L=\d_x$, we have $L(P)=0$. Thus, the solution is $Q(\ee)$-$1$--Gevrey.
		
		\item If $k\geq 0$, we take $P(x,\ee)=x^kQ(\ee)$ and $L=x\d_x$, since $L(P)=kx^kQ(\ee)=kP$. Thus, the solution is $x^kQ(\ee)$-$1$--Gevrey. If $Q(\00)\neq 0$, this means the solution is a $x^k$-$1$--Gevrey series.
	\end{enumerate} 
\end{eje}

\begin{eje}	Let $\ee$ and $Q$ be as in the previous example and assume $P(\00)=0$. If $L=a_1(\xx,\ee)\d_{x_1}+\cdots+a_d(\xx,\ee)\d_{x_d}$, the system of PDEs \begin{equation}\label{Eq. QP}
		Q(\ee)P(\xx)L(\yy)(\xx,\ee)=F(\xx,\ee,\yy),
	\end{equation} can be seen as a singularly perturbed problem where the perturbation is given by $Q$ when $Q(\00)=0$. If $P$ divides $L(P)$, then $L(Q P)=Q L(P)$ is divisible by $QP$ and we can apply Theorem \ref{Thm Main Result} to conclude the system has a unique formal power series solution which is $Q(\ee)P(\xx)$-$1$--Gevrey. Let us consider several instances of this situation. First, assume $P\in\C[\xx]$ is a quasi--homogeneous polynomial, i.e., there are rational numbers $\lambda,\lambda_1,\dots,\lambda_d>0$ such that $P(t^{\lambda_1}x_1,\dots,t^{\lambda_d}x_d)=t^\lambda P(\xx)$. Then \begin{equation*}\label{Eq. L lambda} L_{\boldsymbol{\lambda}}:=\lambda_1 x_1 \d_{x_1}+\cdots+\lambda_d x_d \d_{x_d},
	\end{equation*} satisfies $L_{\boldsymbol{\lambda}}(P)=\lambda P$, and the solution of (\ref{Eq. QP}) is a $Q(\ee)P(\xx)$-$1$--Gevrey. 
	Second, consider the choice \[P(\xx)=\xx^\aa \text{ and } L=\sum_{j=1}^d b_j(\xx) x_j\d_{x_j},\] with $b_j$ holomorphic near the origin. Then $L(\xx^\aa)=\xx^\aa \sum_{j=1}^d \a_j b_j(\xx)$, thus the solution of (\ref{Eq. QP}) is $Q(\ee)\xx^\aa$-$1$--Gevrey. Third, families of PDEs with normal crossings given by  \begin{equation}\label{Eq. eaxaL}
		\ee^{\aa'}\xx^{\aa}L_{\boldsymbol{\lambda}}(\yy)(\xx,\ee)={F}(\boldsymbol{x},\boldsymbol{\varepsilon},\boldsymbol{y}),\end{equation}  $L_{\boldsymbol{\lambda}}$ and $\aa$ as before, $\aa'\in\N^m$, and $\boldsymbol{\lambda}=(\lambda_1,\dots,\lambda_d)\in(\C^\ast)^d$. Since \[L_{\boldsymbol{\lambda}}(\ee^{\aa'}\xx^{\aa})=\left<\boldsymbol{\lambda},\aa\right>\ee^{\aa'}\xx^{\aa},\] where $\left<\boldsymbol{\lambda},\aa\right>:=\lambda_1\a_1+\cdots+\lambda_d\a_d$, we obtain a $\ee^{\aa'}\xx^{\aa}$-$1$--Gevrey solution. We remark these equations have been studied by H. Yamazawa and M. Yoshino in Ref. \cite{YamazawaYoshino15} in the case $m=1$, $\aa=\00$, $\mu=\text{diag}(\mu_1,\dots,\mu_d)$ a diagonal matrix and $\lambda_j, \text{Re}(\mu_k)>0$, for all $j,k=1,\dots,d$. In fact, the authors proved the $1$--summability in $\varepsilon=\eta$ of the formal solution, uniformly in $\xx$. In this trend, and assuming that $\boldsymbol{\lambda}$ has, up to a non-zero constant, positive entries, J. Mozo-Fern\'{a}ndez and the first author in Ref. \cite{CM2} studied these equations for the case $d=2$ and $m=0$ proving the solution is actually $x_1^{\a_1}x_2^{\a_2}$-$1$--summable. Later on, this was generalized by the first author in Ref. \cite{Carr1} for any $d\geq 2$ and $m$ by using an adapted Borel--Laplace method: the formal solution is $\ee^{\aa'}\xx^\aa$-$1$--summable and the singular directions are determined by the solutions of $\det(\left<\ll,\aa\right>\xxi^{\aa}\boldsymbol{\eta}^{\aa'}I_N-\mu)=0,$ in the $(d+m)$-dimensional $(\xxi,\boldsymbol{\eta})-$space. 
	
	Finally, another instance of equation (\ref{Eq. QP}) is the family of scalar singular first-order linear PDEs of nilpotent type \[(\a(x)+\beta(x,y))y\d_xu+(a+b(x,y))y^2 \d_y u +(1+\mathrm{a}(x,y)y)u=f(x,y),\] where $\a(0)\neq 0$ and $\beta(x,0)\equiv b(x,0)\equiv 0$. We obtain a unique  $y$-$1$--Gevrey series solution by taking $P(x,y)=y$ and $L=(\a+\b)\d_x+(a+b)y\d_y$. These equations were studied by M. Hibino in Refs. \cite{Hibino2006I,Hibino2006II}  proving the $1$--summability in $y$, uniformly in $x$, under conditions on $\a$, and on the analytic continuation and exponential growth of  $\beta,b, \mathrm{a}$ and $f$.
\end{eje}

\begin{eje} We examine a particular case of equation (\ref{Eq. eaxaL}) without the singular parameter $\ee$ but this time decomposing the series involved as a power series with coefficients series in a monomial. Taking $\aa\in \N^d\setminus\{\00\}$, $\mu\in\C^\ast$, $\boldsymbol{\lambda}=(\lambda_1,\dots,\lambda_d)\in (\C^\ast)^d$, and $c(\xx)=\sum_{\bb\in\N^d} a_\bb \xx^\bb\in\C\{\xx\}$, consider \[\xx^\aa L_{\boldsymbol{\lambda}}(\yy)=\xx^\aa \left( \lambda_1 x_1 \d_{x_1}\yy +\cdots+ \lambda_d x_d \d_{x_d}\yy \right)=\mu\yy-c(\xx).\] This equation has generically a $\xx^\aa$-$1$--Gevrey formal solution $\widehat{y}$. To find it we can reduce the problem to solve a family of ODEs as follows: write \begin{equation}\label{Eq. Decomposition xa}
		\widehat{y}(\xx)=\sum_{\aa\not\leq \bb} \xx^\bb \widehat{y}_\bb(\xx^\aa), \quad c(\xx)=\sum_{\aa\not\leq \bb} \xx^\bb c_\bb(\xx^\aa),
	\end{equation} as power series with coefficients series in $\xx^\aa$, according to the decomposition $c_\bb(t)=\sum_{n=0}^{\infty} a_{n\aa+\bb} t^n$. Then, plug $\widehat{y}$ into the equation and equate the common terms in $\xx^\bb$. It follows the initial problem is equivalent to solve the family of independent ODEs \[\left<\ll,\aa\right> t^2\widehat{y}_\bb'(t)=(\mu-\left<\ll,\bb\right>t)\widehat{y}_\bb(t)-c_\bb(t),\quad \aa\not\leq \bb.\] Thus each $\widehat{y}_\bb$ is uniquely determined and generically $t$-$1$--Gevrey. For instance, if $c(\xx)=\xx^{\bb}$ we have two cases: if $\left<\ll,\aa\right>=0$, the solution is $\widehat{y}(\xx)=\frac{\xx^\bb}{1-\left<\ll,\bb\right>\xx^\aa}$ which is convergent. Otherwise, after some calculations we find the $\xx^\aa$-$1$--Gevrey solution \[\widehat{y}(\xx)=\sum_{n=0}^\infty \binom{-\left<\ll,\bb\right>/\left<\ll,\aa\right>}{n}(-1)^n n! \frac{\left<\ll,\aa\right>^n}{\mu^{n+1}} \xx^{n\aa+\bb}.\] Note $\widehat{y}$ reduces to a polynomial if $\left<\ll,m\aa+\bb\right>=0$, for some $m\geq 0$. 	
	
	As an explicit example in two dimensions consider \[x_1x_2\left( x_1 \d_{x_1} y- x_2 \d_{x_2} y\right)=\mu y-(1-x_1)^{-1}(1-x_2)^{-1}.\] Then, decomposition (\ref{Eq. Decomposition xa}) takes the form \begin{align*}
		\widehat{y}(x_1,x_2)&=\sum_{n=0}^\infty y_{(n,0)}(x_1x_2) x_1^n+\sum_{n=1}^\infty y_{(0,n)}(x_1x_2) x_2^n,\\ 
		\frac{1}{(1-x_1)(1-x_2)}&=\sum_{n,m\geq 0} x_1^nx_2^m=\frac{1}{1-x_1x_2}+\sum_{n=1}^\infty \frac{x_1^n+x_2^n}{1-x_1x_2},
	\end{align*}
	and we find the coefficients are equal to  \[y_{(n,0)}(t)=\frac{1}{(1-t)(\mu-nt)},\quad y_{(0,n)}(t)=\frac{1}{(1-t)(\mu+nt)},\text{ valid for }|t|<\frac{|\mu|}{n}.\] By using the Taylor series at the origin of the previous functions we can determine $\widehat{y}$. The relation between $\widehat{y}$ and the solution 
	\[y_0(x_1,x_2)=\frac{1}{1-x_1x_2}\sum_{n=0}^\infty \frac{x_1^n}{\mu-n x_1x_2}+\frac{1}{1-x_1x_2}\sum_{n=1}^\infty \frac{x_2^n}{\mu +nx_1x_2},\] which is analytic on $\{(x_1,x_2)\in\C^2 : |x_1|,|x_2|<1, x_1x_2\neq \mu/n, n\geq 1\}$, is that $y_0$ is the $x_1x_2$-$1$--sum of $\widehat{y}$, see Ref. \cite[Example 2.1]{CM16} for more details on similar calculations with these series and their monomial summability.
\end{eje}

Theorem \ref{Thm Main Result} can also be applied in other situations after ramifications and punctual blow--ups. We illustrate this fact with two final examples.

\begin{eje}\label{Ex. Zhang} Consider the system of PDEs given by \begin{equation}\label{Eq. Singular}
		x_1^{p_1+1}c_1(\xx) \d_{x_1}\yy+\cdots+x_d^{p_d+1}c_d(\xx) \d_{x_d}\yy=\boldsymbol{F}(\xx,\yy),
	\end{equation} where $\mu=\frac{\partial \boldsymbol{F}}{\partial \yy}(\00,\00)$ is invertible, and we assume $1\leq p_1\leq p_j$, for all $j$. Then the system has a unique formal power series solution $\widehat{\yy}$ which is $(1/p_1,\dots,1/p_1)$--Gevrey. For instance, an explicit example is the multidimensional Euler's equation 
	\[x_1^2\d_{x_1}y+\cdots+x_d^2\d_{x_d}y+y=\xx^\11,\quad \widehat{\yy}(\xx)=\sum_{\bb\in\N^d} (-1)^{|\bb|}|\bb|!\xx^{\bb+\11},\] where $\11=(1,\dots,1)$.
	
	To prove the claim we use the punctual blow--up (\ref{Eq. Blow up}) to find that 
	\[z_1\d_{z_1}=x_1\d_{x_1}+\cdots+x_d\d_{x_d},\quad z_j\d_{z_j}=x_j\d_{x_j},\quad  j=2,\dots,d,\] and thus (\ref{Eq. Singular}) takes the form \[z_1^{p_1}{L}'(\uu)=z_1^{p_1+1}{c}'_1\d_{z_1}\uu +z_1^{p_1}\sum_{j=2}^d (z_1^{p_j-p_1}z_j^{p_j}{c}'_j-{c}'_1) z_j\d_{z_j}\uu=\boldsymbol{F}'(\zz,\uu),\] where ${c}'_j(\zz)=c_j(\xx)$, $\boldsymbol{F}'(\zz,\uu)=\boldsymbol{F}(\xx,\yy)$, and $\uu(\zz)=\yy(\xx)$. Since ${L}'(z_1^{p_1})=p_1 z_1^{p_1}{c}'_1(\zz)$ and  $\frac{\partial {\boldsymbol{F}'}}{\partial \zz}(\00,\00)=\mu$ is invertible, Theorem \ref{Thm Main Result} implies that $\widehat{\uu}(\zz)=\widehat{\yy}(\xx)$ is a $z_1^{p_1}$-$1$--Gevrey series. Then the proof of Proposition \ref{Prop. Ps sss Gevrey} shows $\widehat{\yy}$ is a $(1/p_1,\dots,1/p_1)$--Gevrey series as desired.

	On the other hand, it is worth remarking (\ref{Eq. Singular}) has been recently studied by Z. Luo, H. Chen and C. Zhang in Ref. \cite{Zhang19} for the case \[x_1^2 c_1(\xx)\d_{x_1}u+x_2^2 c_2(\xx) \d_{x_2}u=b(\xx) u-a(\xx),\]  where $a,b,c_1,c_2$ are analytic near $\00\in\C^2$ and $b(\00)c_1(\00)c_2(\00)\neq 0$. This scalar equation has a unique formal power series solution $\hat{u}$ which is Borel--summable in the variables $(x_1,x_2)$. In particular, $\hat{u}$ is $(1,1)$--Gevrey as we have shown.	
\end{eje}

\begin{eje}\label{Ex. Klimes} Consider the family of ODEs unfolding $k+1$ singularities  \begin{equation}\label{Eq. Unfolding} (x^{k+1}-\varepsilon)\frac{d\yy}{dx}=\mu \yy-f(x,\varepsilon,\yy),\end{equation} where $k$ is a positive integer, $\mu$ is an invertible matrix, $f$ is analytic at the origin in $\C\times\C\times\C^d$, $\frac{\d f}{\d \yy}(0,0,\00) =\00$, and $\varepsilon\in (\C,0)$ is a small parameter. These systems have been studied by M. Klime\v{s} in Ref. \cite{Klimes2016} for the case $k=1$ by using an adapted (unfolded) Borel-Laplace method to obtain parametric solutions bounded on certain ramified domains attached to both singularities $x=\pm \sqrt{\varepsilon}$, at which they possess a limit in a spiraling manner. 	
	
	As it is remarked in Ref. \cite[Section 2.4]{Klimes2016}, the system above has a unique formal power series solution $\widehat{\yy}$ which is $\left(\frac{1}{k},\frac{k+1}{k}\right)$--Gevrey in $(x,\varepsilon)$. We can prove this readily as follows: consider the ramification $\varepsilon=\eta^{k+1}$, and afterwards the punctual blow-up $x=z$, $\eta=z\zeta$. In these coordinates equation (\ref{Eq. Unfolding}) takes the form \[z^k\left(z\frac{\d \uu}{\d z}-\zeta \frac{\d \uu}{\d \zeta}\right)=(1-\eta^{k+1})^{-1}\left(\mu \uu-f(z,z^{k+1}\zeta^{k+1},\uu)\right),\] where $\uu(z,\zeta)=\yy(z,z^{k+1}\zeta^{k+1})=\yy(x,\varepsilon)$. By applying Theorem \ref{Thm Main Result} to $P=z^k$ and $L=z\d_z-\zeta \d_\zeta$ we find $\widehat{\uu}(z,\zeta)=\widehat{\yy}(x,\varepsilon)$ is $z^k$-$1$--Gevrey, since $L(P)=kz^k$. Thus, $\widehat{\yy}(x,\eta^{k+1})$ is $\left(\frac{1}{k},\frac{1}{k}\right)$--Gevrey in $(x,\eta)$, and therefore $\widehat{\yy}(x,\varepsilon)$ is $\left(\frac{1}{k},\frac{k+1}{k}\right)$--Gevrey in $(x,\varepsilon)$ as claimed.
\end{eje}

\section*{Acknowledgements} The authors wish to thank the organizers of FASnet20 for their efforts to host this online workshop and for giving us the opportunity to present there the results discussed in this article. We also thank the referee for the careful reading and the many suggestions that improved the paper.

The first author is supported by Ministerio de Econom\'{i}a y Competitividad from Spain, under the Project ``M\'{e}todos asint\'{o}ticos, algebraicos y geom\'{e}tricos en foliaciones singulares y sistemas din\'{a}micos" (Ref.: PID2019-105621GB-I00) and Univ. Sergio Arboleda project IN.BG.086.20.002.

\bibliographystyle{plaindin_esp}

\end{document}